\theoremstyle{plain}
\newtheorem{theorem}{Theorem}[section]
\newtheorem*{theorem*}{Theorem}
\newtheorem{proposition}[theorem]{Proposition}
\newtheorem{corollary}[theorem]{Corollary}
\newtheorem{lemma}[theorem]{Lemma}
\theoremstyle{definition}
\newtheorem{definition}[theorem]{Definition}
\newtheorem{remark}[theorem]{Remark}
\newtheorem{example}[theorem]{Example}
\newcommand{\enm}[1]{\ensuremath{#1}}          %
\newcommand{\cal}[1]{\mathcal{#1}}
\newcommand{\NN}{\enm{\mathbb{N}}}
\newcommand{\PP}{\enm{\mathbb{P}}}
\newcommand{\Ii}{\enm{\cal{I}}}
\newcommand{\Oo}{\enm{\cal{O}}}
\newcommand{\Ss}{\enm{\cal{S}}}
\renewcommand{\phi}{\varphi}
\renewcommand{\theta}{\vartheta}
\renewcommand{\epsilon}{\varepsilon}
\begin{document}

\title[Segre varieties]
{Linearly dependent and concise subsets of a Segre variety depending on $k$ factors}
\author{E. Ballico}
\address{Dept. of Mathematics\\
 University of Trento\\
38123 Povo (TN), Italy}
\email{ballico@science.unitn.it}
\thanks{The author was partially supported by MIUR and GNSAGA of INdAM (Italy).}
\subjclass[2010]{14N07; 14N05; 12E99; 12F99}
\keywords{Segre varieties; tensor rank; tensor decomposition}

\begin{abstract}
We study linearly dependent subsets with prescribed cardinality, $s$, of a multiprojective space. If the set $S$ is a circuit, we give
an upper bound on the number of factors of the minimal multiprojective space containing $S$, while if $S$ has higher dependency this may be not true without strong assumptions.
We describe the dependent subsets $S$ with $\#S=6$.
\end{abstract}

\maketitle

\section{Introduction}
Take $k$ non-zero finite dimensional vector spaces $V_1,\dots,V_k$ and consider $V_1\otimes \cdots \otimes V_k$. An element
$u\in V_1\otimes \cdots \otimes V_k$ is called a $k$-tensor with format $(\dim V_1,\dots ,\dim V_k)$ (\cite{l}).
Two non-zero proportional tensors share many properties. Thus often the right object to study is the
projectivization, $\PP^r$, of $V_1\otimes \cdots \otimes V_k$, where $r:= -1 +\dim V_1\times \cdots \times \dim V_k$. Set
$n_i:= \dim V_i -1$ and consider the multiprojective space $Y:= \PP^{n_1}\times \cdots \times \PP^{n_k}$. Let $\nu:
Y\hookrightarrow \PP^r$ denote the Segre embedding. Many properties of a non-zero tensor  $u$ (e.g., the tensor rank and the tensor
border rank) may be describe in how its equivalence class $[u]\in \PP^r$ sits with respect to the Segre variety $\nu (Y)$. For instance, the tensor rank $r_Y([u])$ of
$u$ is the minimal cardinality of a finite set $S\subset Y$ such that $\nu (S)$ spans $[u]$. We call $\Ss (Y,[u])$ the set of
all $S\subset Y$ with minimal cardinality such that $\nu (S)$ spans $[u]$. Using subsets of $Y$ instead of ordered sets of
points and $\PP^r$ instead of  $V_1\otimes \cdots \otimes V_k$ we take care of the obvious non-uniqueness in a finite finite
decomposition $u = \sum _i v_{i1}\otimes \cdots v_{ik}$, $v_{ij}\in V_j$, of a tensor.

Fix an equivalence class $q =[u]\in \PP^r$ of non-zero tensors. The \emph{width} $w(q)$ of $q$ is the minimal number of
non-trivial factors of the minimal multiprojective subspace $Y'\subseteq Y$ such that $q\in \langle \nu (Y')\rangle$, where $\langle \ \ \rangle$ denote the linear span. For any
finite set
$A\subset Y$ the
\emph{width} $w(A)$ of $A$ is the number of integers $i\in
\{1,\dots ,k\}$ such that $\sharp (\pi _i(A)) >1$. By concision we have $w(q) = w(A)$ if $A\in \Ss (Y,q)$ (\cite[Proposition 3.2.2.2]{l}).

The non-uniqueness of tensor decompositions, i.e. the fact that $\Ss (Y,[u])$ may have more than one element, may be rephrased
as the linear dependency of certain subsets of $Y$ 8\cite{bbs}). For any finite set
$S\subset Y$ set
$e(S):= h^1(\Ii _S(1,\dots ,1))$. By the definition of Segre embedding and the Grassmann's formula we have
$e(S) =
\#S -1 -\dim \langle \nu (S)\rangle$. We say that a non-empty finite set
$S\subset Y$ (or that
the finite set $\nu (S)\subset
\PP^r$) is
\emph{equally dependent} if
$\dim
\langle
\nu (S)\rangle \le \sharp (S) -2$ and $\langle \nu (S')\rangle = \langle \nu (S)\rangle$ for all $S'\subset S$ such that
$\sharp(S') =\sharp (S)-1$. Note that $S$ is equally dependent if and only if $e(S)>0$ and $e(S')<e(S)$ for all
$S'\subsetneq S$, i.e. if and only if $S\ne \emptyset$ and $e(S') <e(S)$ for all $S'\subsetneq S$. We say that $S$ is \emph{uniformly dependent}  if $e(S') =\max \{0,e(S)-\#S+\#S'\}$ for all $S'\subset S$.
A uniformly dependent subset is equally dependent, but when $e(S)\ge 2$ the two notions have quite different (the key Examples
\ref{k2} and \ref{k3} are equally dependent, but not uniformly dependent). When $e(S)=1$ equal and uniform dependence
coincide. An equally dependent subset with $e(S)=1$ is often called a \emph{circuit}.

\begin{theorem}\label{x1}
Let $S\subset Y$ be a circuit. Set $s:= \# (S)$. Then $w(S)\le \binom{s}{2} +s$.
\end{theorem}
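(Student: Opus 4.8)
The plan is to reduce to the concise situation and then extract, from the single relation carried by a circuit, one low-degree rational curve in each active factor, so that bounding $w(S)$ becomes bounding the number of such curves. First I would pass to the minimal multiprojective subspace $Y'\subseteq Y$ containing $S$; by concision $w(S)$ equals the number $k$ of factors of $Y'$, each projection $\pi_i(S)$ spans the corresponding $\PP^{n_i}$, and every factor of $Y'$ is active, so it suffices to bound $k$. Write $S=\{p_1,\dots,p_s\}$ and $T_j=\nu(p_j)$, and choose representing vectors so that the unique circuit relation reads $\sum_{j=1}^s T_j=0$ with every coefficient nonzero. Since $S$ is a circuit, any $s-1$ of the $T_j$ are linearly independent, so $\nu(S)$ is a set of $s$ points in linearly general position spanning $M:=\langle\nu(S)\rangle\cong\PP^{s-2}$.

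The geometric engine I would use is the rational normal curve. Points in linearly general position in $\PP^{s-2}$ lie on a unique rational normal curve $\Gamma$ of degree $s-2$; fix a parametrization $\nu(p_j)=\gamma(t_j)$ with distinct $t_j\in\PP^1$. On $\Gamma\subset M\subset\PP^r$ every coordinate of $\PP^r$ restricts to a polynomial of degree $\le s-2$ in $t$, while on $\nu(Y)$ the coordinate $x_{a_1\cdots a_k}$ is the product $\prod_i (v^{(i)})_{a_i}$ of the factor coordinates. Fixing all but the $i$-th index and forming ratios, the $i$-th factor of the points is governed by degree-$\le(s-2)$ polynomials in $t$; hence for each $i$ the assignment $t_j\mapsto\pi_i(p_j)$ extends to a morphism $\phi_i\colon\PP^1\to\PP^{n_i}$ of degree $d_i\le s-2$, nonconstant exactly when factor $i$ is active. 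These assemble into a rational curve $C=(\phi_1,\dots,\phi_k)\colon\PP^1\to Y'$ through all the $p_j$, and $k$ is precisely the number of nonconstant $\phi_i$.

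What remains, and what I expect to be the main obstacle, is to bound the number of nonconstant $\phi_i$ by $\binom{s}{2}+s$. If one knew $\nu(C)\subseteq M$, equivalently $\Gamma\subset\nu(Y)$, then $\sum_i d_i=\deg\nu(C)\le s-2$ and the far stronger bound $k\le s-2$ would follow at once; the difficulty is that $\Gamma$ need not lie on the Segre variety once $s\ge 4$, so the degrees $d_i$ can accumulate and must instead be controlled against the interpolation data at the $s$ points. The shape of the target is suggestive here: $\binom{s}{2}+s=\binom{s+1}{2}=\dim\operatorname{Sym}^2(\CC^s)$ is the dimension of the space of quadrics on the $\PP^{s-1}$ in which $s$ points generically sit, which points to a quadratic, rather than linear, counting of factors.

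Accordingly, I would try to attach to each active factor an independent quadratic condition on $\CC^s$, for instance coming from the vanishing along $M$ of the appropriate $2\times 2$ minors of the $i$-th flattening of $\sum_j\lambda_j T_j$, and then prove that these conditions are linearly independent across the active factors, so that their number cannot exceed $\dim\operatorname{Sym}^2(\CC^s)=\binom{s}{2}+s$. Establishing that linear independence, or equivalently pinning down quantitatively how far $\Gamma$ can fail to be contained in $\nu(Y)$, is the crux; everything before it is either the standard concision reduction or the classical rational normal curve through points in linearly general position.
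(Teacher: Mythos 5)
The fatal problem is that your argument stops exactly where the theorem begins: no step in the proposal produces any inequality on the number of active factors, and you say so yourself. The reduction to the concise case, the rational normal curve $\Gamma$ through $\nu(S)$, and the interpolating maps $\phi_i$ yield a rational curve of multidegree $(d_1,\dots,d_k)$ with $d_i\le s-2$ through $S$, but since nothing bounds $\sum_i d_i$ (only each $d_i$ separately), this gives no control on $k$. The closing paragraph, where the bound $\binom{s}{2}+s$ would have to come from, is a hope rather than an argument: you never single out, for a given active factor $i$, one well-defined quadratic form on $K^s$ (the $2\times 2$ minors of the $i$-th flattening of $\sum_j\lambda_jT_j$ are many quadrics, and their vanishing on all of $M$ would mean $M$ lies in the rank-$\le 1$ locus of that flattening, which essentially never happens); you give no mechanism by which the circuit hypothesis would force linear independence of whichever quadrics are chosen; and without that mechanism the comparison with $\dim\operatorname{Sym}^2(K^s)$ is pure numerology. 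Indeed, the identity $\binom{s}{2}+s=\binom{s+1}{2}$ is not what is behind the bound in the paper: there, $\binom{s}{2}$ counts \emph{pairs of points} of $S$ and the extra $s$ comes from an iteration, with no quadrics anywhere.

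There are also genuine flaws in the part you regard as routine. Through $s$ points in linearly general position in $\PP^{s-2}$ there are infinitely many rational normal curves (uniqueness requires $s+1$ points); this is harmless, but the maps $\phi_i$ are \emph{not} well defined by ``fixing all but the $i$-th index and forming ratios'': since $\Gamma\not\subset\nu(Y)$ in general, different choices of the fixed indices give genuinely different rational maps, agreeing only at the parameters $t_j$, and a fixed coordinate slice interpolates $\pi_i(p_j)$ only at those $j$ where the corresponding product of the other coordinates of $p_j$ is nonzero. One can repair this by contracting the $i$-th flattening of $\gamma(t)$ against a generic functional, but that requires an infinite field, whereas the theorem is stated over an arbitrary $K$ (one would then have to check that the circuit property and the width are preserved under the extension $K\hookrightarrow\overline{K}$). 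By contrast, the paper's proof is combinatorial and cohomological, and quite short on the combinatorial side: if $k>\binom{s}{2}$, the pigeonhole principle applied to the pairs $\{a_i,b_i\}$ collapsed by the projections $\eta_i$ forces two factors $i\ne j$ to collapse the \emph{same} pair $\{a,b\}$, whence $\pi_h(a)=\pi_h(b)$ for every $h$ and $a=b$, a contradiction; iterating gives $s$ factors on which $\eta_E$ is injective once $k>\binom{s}{2}+s$, and the theorem then follows by induction on $s$ using residual exact sequences and the circuit hypothesis. Nothing in your plan substitutes for that counting argument, which is the actual content of the bound.
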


We give examples for any $\#S \ge 6$ of an equally dependent set $S$ with $e(S)>1$ and $w(S)$ arbitrarily large (Example
\ref{k4}). Thus Theorem
\ref{x1} is not true for equally dependent subsets with higher dependency $e(S)$.

To see that Theorem \ref{x1} may be used in cases with $e(S)>1$ we consider the following extremal case.
Fix an integer $e>0$. Let $S$ be a finite subset of a multiprojective space. We say that $S$ is an \emph{$e$-circuit} if $e(S)=e$
and there is a subset $S'\subseteq S$ such that $S'$ is a circuit and $\#S-\#S' =e-1$. A uniformly dependent set $S$ is an
$e(S)$-circuit, but the converse does not hold (Example \ref{z1}).

\begin{corollary}\label{x1.1}
Let $S\subset Y$ be an $e$-circuit. Set $z:= \# (S)-e+1$. Then $w(S)\le \binom{z}{2}+z$.
\end{corollary}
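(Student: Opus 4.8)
The plan is to reduce everything to Theorem \ref{x1} applied to the distinguished circuit contained in $S$. Since $S$ is an $e$-circuit, by definition there is a circuit $S'\subseteq S$ with $\#S-\#S'=e-1$, so that $\#S'=\#S-e+1=z$. First I would compare the two linear spans using the formula $e(T)=\#T-1-\dim\langle\nu(T)\rangle$: from $e(S)=e$ we get $\dim\langle\nu(S)\rangle=\#S-1-e$, while $e(S')=1$ gives $\dim\langle\nu(S')\rangle=\#S'-2=\#S-e-1$. Hence the two spans have the same dimension, and since $S'\subseteq S$ forces $\langle\nu(S')\rangle\subseteq\langle\nu(S)\rangle$, they coincide:
\[
\langle\nu(S')\rangle=\langle\nu(S)\rangle.
\]

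The crux is then to deduce $w(S)=w(S')$ from this equality of spans; granting it, Theorem \ref{x1} applied to the circuit $S'$ yields $w(S)=w(S')\le\binom{z}{2}+z$, which is the assertion. One inequality is immediate: $S'\subseteq S$ gives $\#\pi_i(S')\le\#\pi_i(S)$ for every $i$, hence $w(S')\le w(S)$. For the reverse inequality I would argue one factor at a time. Fix an index $i$ with $\#\pi_i(S')=1$, say $\pi_i(S')=\{p_i\}$, and let $Y^{(i)}\subseteq Y$ be the multiprojective subspace obtained by replacing the $i$-th factor of $Y$ by the single point $p_i$. Then $\nu(S')\subseteq\langle\nu(Y^{(i)})\rangle$, so the equality of spans gives $\nu(s)\in\langle\nu(S)\rangle=\langle\nu(S')\rangle\subseteq\langle\nu(Y^{(i)})\rangle$ for every $s\in S$.

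To finish I would invoke the elementary Segre fact that $\langle\nu(Y^{(i)})\rangle$ is the projectivization of $V_1\otimes\cdots\otimes\langle v_i\rangle\otimes\cdots\otimes V_k$ (with $v_i$ representing $p_i$), and that a non-zero decomposable tensor $w_1\otimes\cdots\otimes w_k$ lies in this subspace if and only if $w_i\in\langle v_i\rangle$. Applied to each $\nu(s)$ this forces $\pi_i(s)=p_i$ for all $s\in S$, i.e.\ $\#\pi_i(S)=1$. Thus every factor trivial on $S'$ is trivial on $S$, giving $w(S)\le w(S')$ and hence $w(S)=w(S')$. I expect the only delicate point to be this last concision step: it amounts to showing that the number of essential factors is an invariant of the linear span $\langle\nu(S)\rangle$ rather than of $S$ itself, and the decomposable-tensor membership criterion is precisely the place where the Segre geometry---rather than pure linear algebra---enters.
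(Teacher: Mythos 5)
Your proof is correct and follows essentially the same route as the paper: restrict to the circuit $S'\subseteq S$, apply Theorem \ref{x1} to it, show $\langle\nu(S')\rangle=\langle\nu(S)\rangle$ by the same dimension count, and conclude $w(S)=w(S')$ by concision. The only difference is cosmetic: where the paper cites concision from Landsberg's book (\cite[3.1]{l}), you prove the needed statement directly via the decomposable-tensor membership criterion (contraction against functionals on the factors $j\ne i$), which makes the argument self-contained but does not change its structure.
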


In the second part of this paper we study equally dependent subsets $S$ of a Segre variety with $\#S=6$. We prove the
following result.

\begin{theorem}\label{is1}
Let $Y = \PP^{n_1}\times \cdots \times \PP^{n_k}$, $n_1\ge \cdots \ge n_k>0$ be a multiprojective space and $S\subset Y$
a concise and equally dependent set with $\#S=6$. Then either $e(S) \ge 2$ and $(Y,S)$ is in one of the Examples \ref{k2} and \ref{k3} or $w(Y) \le 4$ and $Y =(\PP^1)^4$ if $w(Y)=4$.\end{theorem}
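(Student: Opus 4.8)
The plan is to stratify the concise, equally dependent sets $S$ with $\#S=6$ by the value of $e:=e(S)$, noting first that (since $S$ is concise) $w(S)=w(Y)=k$ is exactly the number of factors. The six distinct points span a space of dimension $\#S-1-e=5-e\ge 1$, and $e\ge 1$ by equal dependence, so $e\in\{1,2,3,4\}$. The defining property of equal dependence, that every $(\#S-1)$-subset spans the same space as $S$, reads as a uniform-type condition on $\nu(S)$: for $e=1$ every five of the six points are independent, so $S$ is a circuit spanning a $\PP^4$; for $e=2$ every five of the six points span the full $\PP^3$; for $e=3$ every five span the $\PP^2$; and $e=4$ forces the six points onto a line of $\PP^r$. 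I would treat the circuit case $e=1$ and the higher cases $e\ge 2$ separately: the former yields the width bound $w(S)\le 4$, and the latter yields, after discarding the low-width degenerations, the two Examples.

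For the circuit case Theorem \ref{x1} already gives the crude bound $w(S)\le\binom{6}{2}+6=21$, so only finitely many factors occur; the real task is to sharpen $21$ to $4$. Conciseness gives $\#\pi_j(S)\ge n_j+1$ for each factor $j$, since the $j$-th components of the six points must span $V_j$. The engine I would use is the single-factor flattening $\PP^r=\PP(V_j\otimes W_j)$ with $W_j=\bigotimes_{l\ne j}V_l$: writing $\nu(p_i)=v_i\otimes w_i$ with $v_i\in V_j$ and $w_i\in W_j$ a Segre point of $Y_j:=\prod_{l\ne j}\PP^{n_l}$, contraction of the unique circuit relation $\sum_i c_i\nu(p_i)=0$ by each $\alpha\in V_j^\ast$ gives a dependency $\sum_i c_i\alpha(v_i)\,w_i=0$ among the $w_i$. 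Because the $v_i$ span $V_j$ and every $c_i\ne 0$, the map $\alpha\mapsto (c_i\alpha(v_i))_i$ is injective, producing an $(n_j+1)$-dimensional space of relations among the $w_i$, whence $\dim\langle w_i\rangle\le 6-(n_j+1)$; combined with the inclusion $\langle\nu(S)\rangle\subseteq V_j\otimes\langle w_i\rangle$, which forces $5\le (n_j+1)\dim\langle w_i\rangle$, this controls how much each factor can contribute to the span. Pushing these inequalities across all factors simultaneously — equivalently, controlling how fast $\dim\langle\nu(S)\rangle$ drops as one forgets the factors one at a time — should force $k\le 4$, with equality saturating the bounds only when every $n_j=1$, giving $Y=(\PP^1)^4$.

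For $e\ge 2$ the span $\langle\nu(S)\rangle$ is a $\PP^{5-e}$ of dimension at most $3$, and equal dependence says precisely that every five of the six points already span it. Thus for $e=2$ one has six points of a $\PP^3$, every five of which span $\PP^3$; for $e=3$ six points of a $\PP^2$, every five spanning $\PP^2$; and $e=4$ forces six collinear points. I would then classify which multiprojective spaces $Y$ carry such a configuration as a \emph{concise} subset of $\nu(Y)$: reading conciseness factor by factor through the projections $\pi_j$, together with the spanning hypothesis on the five-point subsets, leaves only finitely many combinatorial types, and each surviving type is exhibited as a member of Example \ref{k2} or Example \ref{k3}, the collinear case $e=4$ and the remaining low-width survivors falling under the alternative $w(Y)\le 4$.

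The main obstacle is the circuit case: turning the per-factor flattening inequalities into the sharp global bound $w(S)\le 4$, and checking that $w(S)=4$ is attained only on $(\PP^1)^4$, requires combining the contraction relations over all factors at once rather than one at a time, together with a short but delicate analysis of which fibre partitions of six points are compatible with a single dependency. The classification for $e\ge 2$ is more routine once the uniform matroid structure is fixed; its difficulty is bookkeeping, namely enumerating the conciseness-compatible fibre patterns and matching each to the prescribed Examples.
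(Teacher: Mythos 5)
Your stratification by $e(S)$ is a sensible opening, but both of its main branches have genuine gaps, and the decisive one is exactly where you flag an ``obstacle'': the circuit case. The per-factor contraction inequalities you derive are correct (writing $d_j$ for the dimension of the span of the contracted points $w_i$ in $W_j$, conciseness and the all-nonzero circuit relation do give $d_j\le 5-n_j$ and $(n_j+1)d_j\ge 5$), but they constrain each factor \emph{separately}: when every $n_j=1$ they are satisfied by $d_j\in\{3,4\}$ for each $j$, and this holds simultaneously for arbitrarily many factors, so the conjunction of these inequalities over all $j$ places no upper bound whatsoever on $k$. The step ``pushing these inequalities across all factors simultaneously should force $k\le 4$'' is therefore not a sharpening of an argument you have given; it is the entire theorem in this case, and nothing in the sketch indicates how to do it. The paper does not prove this case by linear algebra on flattenings at all: it fixes a partition $S=A\cup B$ into two triples, observes that for a circuit both $\nu(A)$ and $\nu(B)$ are independent so that $\langle\nu(A)\rangle\cap\langle\nu(B)\rangle$ is a single point $q$ with $r_Y(q)\le 3$, and then invokes classification results for small decompositions: Proposition \ref{cp1} together with \cite[Theorem 1.1, Proposition 5.2]{b} when $r_Y(q)\le 2$, and, for the hardest subcase $r_Y(q)=3$, the identifiability theorem for rank-$3$ tensors \cite[Theorem 7.1]{bbs}. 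Your route would need a substitute for all of that external input, and the bound $w(S)\le\binom{6}{2}+6=21$ from Theorem \ref{x1} plus per-factor inequalities does not come close.

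The branch $e(S)\ge 2$ also contains a gap and a misconception. The configurations are \emph{not} uniform matroids: in both Examples \ref{k2} and \ref{k3} the set $S$ contains a triple that is collinear after the Segre embedding, and detecting such a dependent triple is precisely how the paper identifies these families (the partition $S=A\cup B$ with $\nu(A)$ or $\nu(B)$ dependent, treated in Section \ref{So}). Moreover ``only finitely many combinatorial types'' is misleading at best: the two Examples are families with arbitrarily many factors, so the conciseness-compatible patterns are not finite in any naive sense; what has to be proven is that every concise, equally dependent $S$ of large width lies in one of these two infinite families, and your sketch asserts this matching rather than deriving it. When neither triple is dependent one is again forced into the analysis of the intersection point $q$ and its rank, i.e.\ into the same machinery as the circuit case. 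So the part you call ``routine bookkeeping'' is in fact most of Sections \ref{So}--\ref{St} of the paper.
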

The families in Examples \ref{k2}, \ref{k3} have arbitrarily large width. The case $Y =(\PP^1)^k$ and $e(S) =1$ occurs (\cite[case 3 of Theorem 7.1]{bbs}).
In several cases we could give a more precise description of the pairs $(Y,S)$, but using too much ink (unless some description is needed for the proof of Theorem \ref{is1}).

For any $q\in \PP^r$ and any finite set $S\subset Y$ we say that \emph{$S$ irredundantly spans $q$} if $q\in \langle
\nu(S)\rangle$ and $q\notin \langle \nu (S')\rangle$ for any $S'\subsetneq S$. To prove Theorem \ref{is1} we also
classify the set of all rank
$2$ tensors which may be irredundantly spanned by a set of
$3$ points (Proposition \ref{cp1}).

We work over a field $K$, since for the examples we only use that $\PP^1(K)$ has at least $3$ points. For the proofs which require cohomology proofs (like in the quotations
of \cite[5.1]{bb2} or \cite[2.4, 2.5]{bbcg1} it is sufficient to work over the algebraic closure $\overline{K}$ of $K$, because dimensions of cohomology of algebraic sheaves  on projective varieties (and in particular the definition of $e(S)$) are invariants under the extension $K\hookrightarrow \overline{K}$ (\cite[Proposition III.9.3]{h}). We use Landsberg's book \cite{l} for essential properties on Segre varieties related to tensors (e.g., the notion of concision). This book contains many applications of tensors and additive tensor decompositions are just a way to state linear combinations of elements of the Segre variety $\nu (Y)$. The elementary properties of the Segre varieties that we use do not depends on the base field. For an detailed study of them over a finite field, see \cite{ht}.

\subsection{Outline of the proof of Theorem \ref{is1}}

In section \ref{Sk} we describe the examples mentioned in the statement of Theorem \ref{is1}.
Take $S\subset Y$ such that $\#S=6$ and $S$ is equally dependent. We fix a partition $S =A\cup B$ with $\#A =\#B =3$ and hence $A\cap B =\emptyset$.
In section \ref{So} we assume that at least one among $\nu (A)$ and $\nu(B)$ is linearly dependent. In that section 
we get Examples \ref{k2} and \ref{k3}. Then we assume $\nu (A)$ and $\nu (B)$ linearly independent. Since $A\cap B =\emptyset$, the Grassmann's
formula gives $\dim \langle \nu (A)\rangle \cap \langle \nu (B)\rangle =e(S)-1$. Thus $\langle \nu (A)\rangle \cap \langle \nu
(B)\rangle \ne \emptyset$. We fix a general $q\in \langle \nu (A)\rangle \cap \langle \nu (B)\rangle$. Since $q\in \langle
\nu(A)\rangle$, we have $r_Y(q)\le 3$. We discuss the cases $r_Y(q)=1$, $r_Y(q)=2$,
$r_Y(q)=3$ in sections \ref{Su}, \ref{Sd} and \ref{St}, respectively. For the case $r_Y(q)=3$ we use \cite[Theorem 7.1]{bbs}.
\begin{remark}\label{o8}
The case $k=1$ is  possible with  $Y=\PP^n$ for any $2\le n \le 4$ (any $6$ points spanning  $\PP^n$ partitioned
in two sets of $3$ element no $3$ of them collinear). The case $Y =\PP^1$ was obtained when $e(A)>0$ and $e(B)>0$. When
$Y=\PP^n$ we have $e(S) =6-n-1$.
\end{remark} Thus in sections \ref{So}, \ref{Su}, \ref{Sd} and \ref{St} we silently assume
$k>1$.

\section{Preliminaries and notation}
For any subset $E$ of any projective space let $\langle E\rangle$ denote the linear span of $E$.

For any multiprojective space let
$\nu$ denote its Segre embedding. Let
$Y =\PP^{n_1}\times
\cdots
\times
\PP^{n_k}$ be a multiprojective space. Let $\pi _i: Y\to \PP^{n_i}$ be the projection of $Y$ onto its $i$-th factor. Set $Y_i
:=\prod _{j\ne i} \PP^{n_j}$ and let $\eta _i: Y\to Y_i$ the projection. Thus for any $p= (p_1,\dots ,p_k)\in Y$, $\pi _i({p})
=p_i$ is the
$i$-th component of $p$, while $\eta _i({p}) =(p_1,\dots ,p_{i-1},p_{i+1},\dots ,p_k)$ deletes the $i$-th component of $p$. For
any set $E\subsetneq \{1,\dots ,k\}$ set $Y_E:= \prod _{i\in \{1,\dots ,k\}\setminus E} \PP^{n_i}$ and let $\pi _E: Y\to Y_E$ the projection which forgets all coordinates $i\in E$.

For any $i\in \{1,\dots ,k\}$ let $\epsilon _i\in \NN^k$ (resp. $\hat{\epsilon}_i)$ be the multiindex $(a_1,\dots ,a_k)\in \NN^k$ with $a_i=1$ and $a_h=0$ for all $h\ne i$ (resp. $a_i=0$ and $a_h=1$ for all $h\ne i$). Thus $\Oo _Y(\epsilon _i)$ and $\mathcal {O}_Y(\hat{\epsilon}_i)$ are line bundles on $Y$ and $\Oo _Y(\epsilon _i)\otimes \mathcal {O}_Y(\hat{\epsilon}_i)\cong \Oo _Y(1,\dots ,1)$.

If needed we usually call $\PP^r$ the projectivization of the space of tensors with prescribed format we are working, i.e. the projective space in which the given Segre sits.
For instance, if the given Segre is $\nu (Y)$ we take $r =-1 +\prod _{i=1}^{k}(n_i+1)$.  For any $q\in \PP^r$ e denote with $r_Y(q)$ or with
$r_{\nu (Y)}(q)$ the tensor rank of $q$. For any finite set
$A\subset Y$ the minimal multiprojective subspace of
$Y$ containing
$\prod _{i=1}^{k}
\langle
\pi _i(A)\rangle$. For any $q\in \PP^r:= \langle \nu (Y)\rangle$  let
$r_{\nu (Y)}(q)$ denotes the tensor rank of
$q$. For any positive integer $t$ let $\Ss
(Y,q,t)$ denote the set of all
$S\subset Y$ such that
$q\in
\langle
\nu (S)\rangle$,
$\#S = t$ and $S$ irredundantly spans $q$. The set $\Ss (Y,q):= \Ss (Y,q,r_{\nu (Y)}(q))$ is the set of all tensor decompositions
of $q$ with minimal length. By concision given any
$A\in \Ss (Y,q)$ the minimal multiprojective subspace of $Y$ containing $A$ is the minimal multiprojective subspace $Y'\subseteq Y$ such that $q\in \langle \nu (Y')\rangle $ (\cite[Proposition 3.2.2.2]{l}).

\begin{remark}\label{z3}
Take $S\subset Y$ such that $e(S)>0$ and $\#S\le 3$. Since $\nu$ is an embedding, we have $\#S=3$, $e(S)=1$ and (by the structure of linear subspaces contained in a Segre variety) there is $i$ such that $\pi _h(S)=1$ for all $h \ne 1$, $\pi _{i|S}$ is injective and $\pi _i(S)$ is contained in a line.
\end{remark}

\begin{lemma}\label{o4.1}
Fix a multiprojective space $Y$ and any finite set $Z\subset Y$ with $z:= \#Z \ge 3$ and concise for $Y$. Set $e(Z):= z-1 -\dim \langle \nu(Z)\rangle$. We have $e(Z)\le z-2$ and equality holds if and only if $Y=\PP^1$.
\end{lemma}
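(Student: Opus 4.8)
The plan is to translate both assertions into statements about $\dim\langle\nu(Z)\rangle$ via the defining identity $e(Z)=z-1-\dim\langle\nu(Z)\rangle$. First I would note that since $\nu$ is an embedding and $z\ge 3$, the set $\nu(Z)$ consists of at least two distinct points and hence spans a projective subspace of dimension at least $1$. Substituting $\dim\langle\nu(Z)\rangle\ge 1$ into the identity gives $e(Z)\le z-2$ at once, and moreover shows that equality $e(Z)=z-2$ holds precisely when $\dim\langle\nu(Z)\rangle=1$, i.e. exactly when $\nu(Z)$ spans a line $L\subset\PP^r$. So the whole lemma reduces to proving: $\nu(Z)$ spans a line if and only if $Y=\PP^1$.

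The easy direction is immediate: if $Y=\PP^1$ then $\nu$ is the identity, so conciseness of $Z$ forces $\langle Z\rangle=\PP^1$, whence $\dim\langle\nu(Z)\rangle=1$ and equality holds. The content of the lemma is the converse.

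So suppose $\dim\langle\nu(Z)\rangle=1$ and let $L$ be the line spanned by $\nu(Z)$. Since $\#Z=z\ge 3$, the line $L$ meets the Segre variety $\nu(Y)$ in at least $3$ distinct points. As $\nu(Y)$ is cut out by quadrics (the $2\times 2$ minors), the restriction to $L$ of any such quadric is a form of degree at most $2$ in the homogeneous coordinate of $L$; vanishing at $3$ distinct points it vanishes identically, so $L\subseteq\nu(Y)$. I would then invoke the structure of linear subspaces of a Segre variety used in Remark \ref{z3}: a line inside $\nu(Y)$ arises by fixing all factors but one, so there is an index $i$ with $\pi_h$ constant on $Z$ for every $h\ne i$. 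Conciseness of $Z$ for $Y$ says $\langle\pi_h(Z)\rangle=\PP^{n_h}$ for every $h$; for $h\ne i$ the left side is a single point, forcing $n_h=0$. Since the factors of $Y$ are nontrivial this leaves $k=1$ and $Y=\PP^{n_1}$, where $\nu$ is again the identity; then $Z\subset L$ together with conciseness gives $\PP^{n_1}=\langle Z\rangle\subseteq L$, so $n_1=1$ and $Y=\PP^1$.

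The step needing care is the passage from $L\subseteq\nu(Y)$ to the coordinatewise description of $Z$, and I expect this classification of lines on the Segre variety to be the crux. Concretely, one checks that if two points of $\nu(Y)$ differ in at least two factors then the line joining them already leaves $\nu(Y)$ (a suitable $2\times 2$ minor is nonzero along it), so every line contained in $\nu(Y)$ lies over a single point of each factor except one. Once this is granted, everything else is bookkeeping with the Grassmann identity and the definition of conciseness.
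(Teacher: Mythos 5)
Your proof is correct and takes essentially the same route as the paper's: the Grassmann bound with equality if and only if $\nu(Z)$ is collinear, the fact that $\nu(Y)$ is cut out by quadrics forcing the line $\langle \nu(Z)\rangle$ to lie inside $\nu(Y)$, and the classification of lines on a Segre variety combined with conciseness to conclude $Y=\PP^1$. Your write-up is in fact slightly more detailed than the paper's at the last step, where the paper simply invokes that lines of a Segre variety are Segre varieties.
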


\begin{proof}
Since $\nu$ is an embedding, $\nu (Z)$ is a set of $z\ge 2$ points of $\PP^N$ and hence $\dim \langle \nu(Z)\rangle \ge 1$. The Grassmann's formula gives $e(Z)\le z-2$
and that equality holds if and only if $\nu (Z)$ is formed by collinear points. Since $z\ge 3$, the Segre $\nu(Y)$ is cut out by quadrics and $z\le 3$, we get $\langle \nu (Z)\rangle \subseteq \nu(Y)$. Since the lines of a Segre variety are Segre varieties, the concision assumption gives $Y=\PP^1$.

The converse is trivial, because $h^0(\Oo _{\PP^1}(1))=2$.\end{proof}

The following construction was implicitely used in \cite{bbcg}.

\begin{definition}Fix a multiprojective space $Y =\PP^{n_1}\times \cdots \times \PP^{n_k}$, $n_h>0$ for all $h\ne i$, and $i\in \{1,\dots ,k\}$ (we allow the case $n_i=0$ so that $\PP^{n_i}$ may be a single point). Fix an integer $m_i$ such that $n_i\le m_i\le n_i+1$; if $n_i=0$ assume $m_i=1$. Let $W\supseteq Y$ be a multiprojective space with $\PP^{n_j}$ as its $j$-th factor for all $j\ne i$ and with $\PP^{m_i}$ as its $i$-th factor. Thus $W=Y$ if $m_i=n_i$ and $\dim W =\dim Y+1$ if $m_i=n_i+1$.
If $W\ne Y$ we identify $Y$ with a multiprojective subspace of $W$ identifying its factor $\PP^{n_i}$ with a hyperplane
$M_i\subset \PP^{m_i}$. Fix a finite set $E\subset Y$ (we allow the case $E =\emptyset$) and $o =(o_1,\dots ,o_k)\in
Y\setminus \{o\}$. Set $E_i:= \pi _i(E)\subset \PP^{n_i}$. Fix any $u_i\in \PP^{m_i}\setminus (E_i\cup \{o_i\})  $ and any
$v_i\in \langle \{o_i,v_i\}\rangle$ with $v_i\notin E_i$. Set $u =(u_1,\dots ,u_k)$ and $v:= (v_1,\dots ,v_k)$ with
$u_h=v_h=o_h$ for all $h\ne i$. Set $F:= E\cup \{o\}$ and $G:= E\cup \{u,v\}$. We say that $G$ is an \emph{elementary
increasing} of $F$ with respect to $o$ and the $i$-th factor. Note that $\#G=\#E+2$, $\# F =\#E+1$ and $\langle \nu (F)\rangle
\subseteq \langle \nu (G)\rangle$. If $n_i>0$ we have $w(Y) =w(W)$, while if $n_i=0$ we have $w(W)=w(Y)+1$. Thus an elementary
operation may increase the width, but only by $1$ and only if $n_i=0$.
\end{definition}

\begin{remark}\label{z2}
Let $U\subset Y$ be a finite set, $W\supseteq Y$ any multiprojective space and $V\subset W$ any set obtained from $U$ making an
elementary increasing. For any finite set $G\subset W$ we have either $w(V\cup G) =w(U\cup G)$ or $w(V\cup G) =w(U\cup G)+1$, but the latter may occur only if
$w(V)=w(U)+1$. Even when $w(V) =w(U)+1$ it is quite easy to see for which $G$ we have $w(V\cup G)=w(U\cup G)+1$.
\end{remark}

\section{Proofs of Theorem \ref{x1} and Corollary \ref{x1.1}}

\begin{lemma}\label{a1}Fix a finite set $S\subset Y$, $S\ne \emptyset$. Set $s:= \# (S)$. Assume $k > \binom{s}{2}$. Then
there is
$i\in
\{1,\dots ,k\}$ such that $\eta _{i|S}$ is injective.
\end{lemma}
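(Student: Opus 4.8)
The plan is to argue by a counting (injection) argument on the factors of $Y$. First I would translate the failure of injectivity into a statement about pairs of points of $S$. Since $\eta_i$ forgets exactly the $i$-th coordinate, $\eta_{i|S}$ fails to be injective precisely when there is a pair of distinct points $p,q\in S$ with $\eta_i(p)=\eta_i(q)$; that is, $p$ and $q$ agree in every coordinate except possibly the $i$-th, and since $p\ne q$ they must in fact differ there, so $\pi_i(p)\ne\pi_i(q)$. Call such an index $i$ \emph{bad}, and call it \emph{good} otherwise. The goal is then to show that under $k>\binom{s}{2}$ not every index can be bad.

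The key step is to bound the number of bad indices. To each bad index $i$ I would associate a \emph{witnessing pair} $\{p,q\}\subset S$ of distinct points differing only in the $i$-th coordinate (such a pair exists by the reformulation above). I then claim this assignment is injective on the set of bad indices. Indeed, suppose a single pair $\{p,q\}$ witnessed two distinct bad indices $i\ne j$. Then $p$ and $q$ would agree in all coordinates other than the $i$-th, and also agree in all coordinates other than the $j$-th; in particular, since $j\ne i$, they would agree in the $j$-th coordinate, contradicting that as a witness for $j$ they differ in the $j$-th coordinate. Hence distinct bad indices receive distinct witnessing pairs.

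Since the number of unordered pairs of distinct points of $S$ equals $\binom{s}{2}$, the injection just constructed shows that there are at most $\binom{s}{2}$ bad indices. Under the hypothesis $k>\binom{s}{2}$, the number of good indices is therefore at least $k-\binom{s}{2}>0$, so at least one index $i$ is good, i.e. $\eta_{i|S}$ is injective, which is the assertion.

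I do not expect a genuine obstacle here: the entire content is the injectivity of the witness map, and the rest is bookkeeping. The one point deserving care is to observe that two distinct points always differ in at least one coordinate, so that every non-injectivity of $\eta_{i|S}$ really does arise from a pair differing in \emph{exactly} one coordinate (the $i$-th), and that no single such pair can be charged to two different factors; this is exactly what pins the count at $\binom{s}{2}$.
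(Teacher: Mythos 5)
Your proof is correct and is essentially the paper's own argument: both rest on the observation that a single pair of distinct points can witness the non-injectivity of $\eta_{i|S}$ for at most one index $i$, so the bad indices inject into the $\binom{s}{2}$ pairs. The paper phrases this contrapositively (assume all indices bad, apply pigeonhole to force two indices sharing a pair, conclude the two points coincide), but the content is identical to your injection of bad indices into pairs.
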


\begin{proof}
Since the lemma is trivial if $s=1$, we may assume $s\ge 2$. Assume that the lemma is false. Thus for every $i\in \{1,\dots
,k\}$
there is $\{a_i,b_i\}\subseteq S$ such that $a_i\ne b_i$ and $\eta _i(a_i) =\eta _i(b_i)$. Since $k> \binom{s}{2}$, by the
pigeonhole's principle there are $i, j\in \{1,\dots ,k\}$ and $a, b\in S$ such that $i\ne j$, $a\ne b$ and $\{a,b\}
=\{a_i,b_i\} = \{a_j,b_j\}$. Since $(\{1,\dots ,k\}\setminus \{i\})\cup (\{1,\dots ,k\}\setminus \{j\}) = \{1,\dots ,k\}$,
we get $\pi _h(a)=\pi _h(b)$ for all $h\in \{1,\dots ,k\}$. Thus $a=b$, a contradiction.
\end{proof}

\begin{lemma}\label{a2}
Fix a finite set $S\subset Y$, $S\ne \emptyset$. Set $s:= \# (S)$. Assume $k > \binom{s}{2}+s$. Then
there is
$E\subset
\{1,\dots ,k\}$ such that $\# (E) =s$ and $\eta _{E|S}$ is injective.
\end{lemma}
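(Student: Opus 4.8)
The plan is to apply Lemma \ref{a1} repeatedly, peeling off one factor at a time for a total of $s$ steps. First I would set $Y^{(0)}:=Y$ and $S^{(0)}:=S$, and build inductively a chain of multiprojective spaces obtained from $Y$ by successively deleting factors, together with the images of $S$ in them. Suppose that at step $t$ (with $0\le t<s$) I have a multiprojective space $Y^{(t)}$ with $k-t$ factors and a finite set $S^{(t)}\subset Y^{(t)}$, equal to the image of $S$ under the composition of the $t$ deletions performed so far. Since each of those deletions was chosen to be injective on the relevant set, cardinality is preserved at every step, so $\#S^{(t)}=\#S=s$; in particular $S^{(t)}\ne\emptyset$ and Lemma \ref{a1} is applicable to the pair $(Y^{(t)},S^{(t)})$ with the same value of $s$.

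The key point is that the hypothesis of Lemma \ref{a1} survives each step. For the pair $(Y^{(t)},S^{(t)})$ that hypothesis reads $k-t>\binom{s}{2}$, and since $t\le s-1$ we have $k-t\ge k-(s-1)>\binom{s}{2}+1>\binom{s}{2}$, where the strict inequality uses exactly the assumption $k>\binom{s}{2}+s$. Hence there is a factor of $Y^{(t)}$, corresponding to a well-defined original index $i_{t+1}\in\{1,\dots,k\}$, whose deletion is injective on $S^{(t)}$; I then let $Y^{(t+1)}$ be $Y^{(t)}$ with that factor removed and let $S^{(t+1)}$ be the (bijective) image of $S^{(t)}$, so that $\#S^{(t+1)}=s$ and the induction continues.

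After $s$ steps I will have deleted $s$ pairwise distinct original factors; setting $E:=\{i_1,\dots,i_s\}$ gives $\#E=s$. Because every individual deletion is injective on the set it is applied to, the composite projection $\eta_{E}$, which forgets precisely the coordinates indexed by $E$, is a composition of maps each injective on the successive images of $S$, hence $\eta_{E|S}$ is injective, which is the desired conclusion. The only thing requiring any care is checking that the running inequality $k-t>\binom{s}{2}$ holds all the way through the last step $t=s-1$ (its tightest instance), which is exactly what $k>\binom{s}{2}+s$ guarantees, with one to spare; the remainder is the routine bookkeeping of identifying each $i_{t+1}$ with a unique original index and observing that these indices are distinct.
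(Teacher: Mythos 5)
Your proposal is correct and follows exactly the paper's own strategy: iterate Lemma \ref{a1} a total of $s$ times, each time to the (bijective) image of $S$ in the multiprojective space with one fewer factor, checking that the hypothesis $k-t>\binom{s}{2}$ survives through the last step. The paper states this in one terse sentence; you have merely supplied the routine bookkeeping (preservation of cardinality, distinctness of the deleted indices, and the running inequality) explicitly.
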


\begin{proof}
Since the lemma is trivial if $s=1$, may assume $s\ge 2$. By Lemma \ref{a1} there is $i\in \{1,\dots ,k\}$
such that $\eta _{i|S}$ is injective. Then we continue applying $s-1$ times Lemma \ref{a1} first to the set $\eta _i(S)$ and
the multiprojective space $Y_i$ and then each time to a multiprojective space with a smaller number of non-trivial factors.
\end{proof}

\begin{proof}[Proof of Theorem \ref{x1}:]
Assume $k:= w(S)> \binom{s}{2} +s$. Call $Y = \PP^{n_1}\times \cdots \times \PP^{n_k}$ the minimal multiprojective space containing $S$. We
use induction on the integer $s$, the cases with $s\le 3$ being true, because no such $S$ exists when $s \le 2$, while for $s=3$
we have $w(S)=1$ by the structure of linear subspaces contained in a Segre variety.  By Lemma \ref{a2} there is a set
$E\subset \{1,\dots ,k\}$ such that $\# (E)=s$ and $\eta_{E|S}$ is injective. With no loss of generality we may assume $E
=\{1,\dots ,s\}$.

\quad {\bf Observation 1:} Fix $i\in \{1,\dots ,k\}$ and any hyperplane $M\subset \PP^{n_i}$. Since $S$ is a circuit and $Y$
is the minimal multiprojective space containing $Y$, we have  $h^1(\Ii _{\pi _i^{-1}(M)\cap S}(1,\dots ,1)) =0$.

\quad (a) Fix $i\in
\{1,\dots ,s\}$ and $o\in \pi _i(S)\subset \PP^{n_i}$. Take a general hyperplane
$M\subset \PP^{n_i}$ containing $o$. Set $H:= \pi _i^{-1}(M)$. Since $\pi _i(S)$ is a finite set and $H$ is general, we have
$H\cap\pi _i(S) = \{o\}$. Set $S':=S\setminus S\cap H$. We have the residual exact sequence
\begin{equation}\label{eqb1=}
0 \to \Ii_{S'}(\hat{\epsilon}_i) \to \Ii_S(1,\dots ,1) \stackrel{v}{\to} \Ii_{S\cap H,H}(1,\dots ,1) \to 0
\end{equation} 
Since $S'$ is a finite set, the exact sequence
$$0 \to \Ii_{S'}(\hat{\epsilon}_i) \to \Oo_Y(\hat{\epsilon}_i) \to \Oo_{S'}(\hat{\epsilon}_i)\to 0$$ gives
$h^2(\Ii_{S'}(\hat{\epsilon}_i)) = h^2(\Oo_Y(\hat{\epsilon}_i))=0$. Since $h^1(H,\Ii_{S\cap H,H}(1,\dots ,1))=0$ by Observation 1,
\eqref{eqb1=} gives $h^1(\Ii_{S'}(\hat{\epsilon}_i))=1$. Since $\eta _{i|S}$ is injective and $S'\subset S$, $\eta _{i|S'}$ is
injective. Thus we have
$h^1(\Ii_{S'}(\hat{\epsilon}_i))=h^1(Y_i,\Ii_{\eta _i(S')}(1,\dots ,1))$.

\quad (a1) Assume that $\eta _i(S')$ is not equally dependent. Thus there is $U\subsetneq S'$ such that $c:= h^1(Y_i,\Ii_{\eta
_i(U)}(1,\dots ,1))>0$ and $\dim \mathrm{coker}(v) = c-1$. Let $v'$ the surjection in an exact sequence like (\ref{eqb1=}) with
$(S\cap H)\cup U$ instead $S$. Since $\mathrm{ker}(v')\subseteq \mathrm{ker}(v)$, we have $\dim \mathrm{coker}(v') \le 
\mathrm{coker}(v) = c-1$. Thus
$h^1(\Ii_{(S\cap H)\cup U}(1,\dots ,1))\ge 1$, contradicting the assumption that $S$ is a circuit.

\quad (a2) Assume that $\eta _i(S')$ is a circuit. Since $\# (\eta _i(S'))<\# (S)$, the inductive assumption
gives $w(\eta _i(S'))\le \binom{z}{2}+z$, where $z:= \# (S')$. Hence $w(S') \le 1+\binom{z}{2}+z$.

\quad (b) Let $\alpha$ be the minimal positive integer appearing among the integers $\# (\pi _i^{-1}(o)\cap S)$, $i\in \{1,\dots ,s\}$, $o\in \pi _i(S)$.

\quad (b1) Assume $\alpha =1$ and take $i\in \{1,\dots ,s\}$ and $o\in \pi _i(S)$ such that $\pi _i^{-1}(o)\cap S$ is a single point, $p$. Set $S':= S\setminus \{p\}$. By step
(a) with $z=s-1$ we have $w(S')\le s+\binom{s-1}{2}$. Since $k =w(S) > \binom{s}{2}+s$ by assumption, there is a subset $E\subset \{1,\dots ,k\}$ such that $\# (E) =s$, $\# (\pi _h(S')) =1$ and $\# (\pi _h(S)) =2$ for all $h\in E$. The minimality of $Y$ implies $n_h=1$ for all $h\in E$. Fix $h\in E$. Set $H':=  \pi _h^{-1}(\pi _h(S'))$. We have $S' =H'\cap S$ and hence we have a residual exact sequence
\begin{equation}\label{eqb2}
0 \to \Ii_p(\hat{\epsilon}_h) \to \Ii_S(1,\dots ,1) \to \Ii_{S',H}(1,\dots ,1) \to 0
\end{equation} 
Since $S$ is a circuit and $S'\subsetneq S$, we have $h^1(H,\Ii_{S',H}(1,\dots ,1))=0$. Since $\Oo _Y(\hat{\epsilon}_h)$ is globally generated, we have $h^1(\Ii_p(\hat{\epsilon}_h))=0$. Thus (\ref{eqb2}) gives $h^1(\Ii _S(1,\dots ,1))=0$, a contradiction.

\quad (b2) Assume $\alpha \ge 2$. 
Since $Y$ is minimal among the multiprojective spaces containing $S$, we have $\# (\pi _i(S))\ge n_i+1$ for all $i$ and
in particular $\alpha \le s/2$. For all $i\in \{1,\dots ,s\}$ and $o\in \pi _i(S)$ set $S_{i,o}:= S\setminus S\cap \pi
_i^{-1}(o)$.
Let $Y_{i,o}$ be the minimal multiprojective subspace of $Y$ containing $S_{i,o}$. Since $w(Y_{i,o}) \le
\binom{s-1}{2}-s+1\le k-s$ (Step (a)) there is $E\subset \{1,\dots ,k\}$ such that $\# (E) \ge s$ and for each $h\in
E$ we have
$\# (\pi _h(S_{i,o}))=1$. Fix $h\in E$ and set $H_h:=\pi _h^{-1}(\pi _h(S_{i,o}))$ and $S_h:= S\setminus S\cap
H_h$. We have
$S_h\subseteq S\setminus S_{i,o}$ and hence $\# (S_h)\le \alpha \le s/2$. We have $S_h\ne \emptyset$, because $Y$ is the
minimal multiprojective space containing $S$. The residual exact sequence of $H_h$ gives $h^1(\Ii _{S_h}(\hat{\epsilon}_h))
>0$. Thus either $\eta _{h|S_h}$ is not injective or $h^1(Y_h,\Ii _{\eta _h(S_h)}(1,\dots ,1)) >0$.

\quad (b2.1) Assume $h^1(Y_h,\Ii _{\eta _h(S_h)}(1,\dots ,1)) >0$. As in steps (a1) and (a2) we get that $\eta _h(S_h)$ is a circuit and that
$w(S_h) \le 1+\binom{x}{2}+x$, where $x =\# (S_h)\le s/2$. As in step (b1) we see that $x\ge 2$. We saw that $S_h$ (resp.
$S\setminus S_h$) depends on at most $1+x+\binom{x}{2}$ (resp. $1+ s-x + \binom{s-x}{2}$) factors of $Y$. Since $2\le x\le s/2$ and $k>
2+s+\binom{x}{2} +\binom{s-x}{2}$, there is
$j\in
\{1,\dots ,k\}$ such that
$\# (\pi _j(S_h)) =\# (\pi _j(S\setminus S_h))=1$. Since $Y$ is the minimal multiprojective space containing $S$,
the assumption implies $n_j=1$ and $\pi _j(S_h) \ne \pi _j(S\setminus S_h)$. The residual exact sequence of the divisor $\pi
_j^{-1}(\pi _j(S_h))$ gives $h^1(\Ii _{S\setminus S_h}(\hat{\epsilon}_j)) >0$. Since $\# (\pi _j(S\setminus S_h))=1$,
we have $h^1(\Ii _{S\setminus S_h}(\hat{\epsilon}_j)) =h^1(\Ii _{S\setminus S_h}(1,\dots ,1)) >0$, contradicting the
assumption that $S$ is a circuit.

\quad (b2.2) Assume that $\eta _{h|S_h}$ is not injective. Hence $\eta _{h|S_{i,o}}$ is not injective. By step (b2.1) we may
assume that this is true for all
$h\in E$. Thus for each $h\in E$ there are $u_h,v_h \in S_{i,o}$ such that $u_h\ne v_h$ and $\pi _j(u_h) = \pi _j(v_h)$
for all $j\in \{1,\dots ,k\}$. Since $\# (S_{i,o})\le \# (E)/2$, there are $j, h\in E$ such that $\{u_h,v_h\}\cap
\{u_j,v_j\} \ne \emptyset$. First assume $\# (\{u_h,v_h\}\cap
\{u_j,v_j\})=1$. Permuting the names of the points $u_h,v_h,u_i,v_j$ we may assume $u_h=u_j$ and $v_h\ne v_j$. Since
 $(\{1,\dots ,k\}\setminus \{h\})\cup (\{1,\dots ,k\}\setminus
\{j\}) =\{1,\dots ,k\}$ we get $\pi _t(v_h)=\pi _t(v_j)$ for all $t\in \{1,\dots ,k\}$. Thus $v_h = v_j$, a contradiction.
Now assume $\{u_h,v_h\}\cap
\{u_j,v_j\} =\{u_h,v_h\}$, say $u_h=u_j$ and $v_h=v_j$. Since $(\{1,\dots ,k\}\setminus \{h\})\cup (\{1,\dots ,k\}\setminus
\{j\}) =\{1,\dots ,k\}$ we get $\pi _t(v_h)=\pi _t(u_h)$ for all $t\in \{1,\dots ,k\}$. Thus $v_h = u_h$, a contradiction.
\end{proof}

\begin{proof}[Proof of Corollary \ref{x1.1}]
If $e=1$, then $z=s$ and we apply Theorem \ref{x1}. Assume $e>1$ and take $U\subset S$ such that $\# (U) =e-1$
and $S\setminus U$ is a circuit. Let $Y'$ be the minimal multiprojective space containing $S\setminus U$.
By Proposition
\ref{x1} we have
$w(S\setminus U)\le \binom{z}{2}+z$. Since $h^1(\Ii _{S\setminus U}(1,\dots ,1)) =h^1(\Ii _S(1,\dots ,1)) -\# (U)$. Thus
$\langle \nu (S\setminus U)\rangle =\langle S\rangle$. Thus $\nu (S)\subseteq \langle \nu (Y')\rangle$. By concision
(\cite[3.1]{l}) we have
$S\subset Y'$. Thus $w(S)=w(S\setminus U)$.
\end{proof}

\section{The examples}\label{Sk}

\begin{example}\label{k2}
Fix an integer $k\ge 2$ and integers $n_1,n_2\in \{1,2\}$. We take $Y =\PP^{n_1}\times \PP^{n_2}\times (\PP^1)^{k-2}$. Take $o
=(o_1,\dots ,o_k)\in Y$ and
$p =(p_1,\dots ,p_k)\in Y$ such that $p_i\ne o_i$ for all $i$. Take $u=(u_1,\dots ,u_k)\in Y$, $v= (v_1,\dots ,v_k)\in Y$,
$w =(w_1,\dots ,w_k)\in Y$ and $z =(z_1,\dots ,z_k)\in Y$ such that $u _i=v_i=o_i$ for all $i\ne 1$, $w_i=z_i=p_i$ for all
$i\ne 2$,
$\#\{u_1,v_1,o_1,p_1\} = \#\{o_2,p_2,w_2,z_2\}=4$. We also require if $n_1=2$ (resp. $n_2=2$)
that $\langle u_1,v_1,o_1\}\rangle \subset \PP^2$ is a line not containing $p_1$ (resp.  $\langle w_2,z_2,p_2\}\rangle
\subset
\PP^2$ is a line not containing $o_2$.  Set $S:= \{o,p,u,v,w,z\}$. By construction
$\#S =6$,
$S$ is concise for
$Y$, and $e(S)=2$. It is easy to check that $e(S')=1$ (but $S'$ is not a circuit) for any $S'\subset S$ such that $\#S'=5$.

The family of these sets $S$ has dimension $n_1+n_2+2$. If $k>2$ instead of taking the first and second factors of $Y$ we may take two arbitrary (but distinct) factors and obtain
another family of sets $S$ not projectively equivalent to the one constructed using the first and second
factors. A small modification of the construction works even if $o_i=p_i$ for some $i\in \{1,2\}$, but in that case we are forced to take $n_i:=1$.
\end{example}

\begin{example}\label{k3}
Fix integers $n\in \{1,2,3\}$ and $k\ge 1$. Set $Y:= \PP^n\times (\PP^1)^{k-1}$. If $k>1$ fix any $o_i,p_i \in \PP^1$, $2\le
i\le k$, such that $o_i\ne p_i$ for all $i$. Fix lines $L\subseteq \PP^n$ and $D\subseteq \PP^n$. If $n=2$ assume $L\ne D$.
If $n=3$ assume $L\cap D=\emptyset$. Fix $3$ distinct points $o_1,u_1,v_1\subset L$ and $3$ distinct points $w_1,p_1,z_1$ of
$D$.
If $n=1$ assume $\#\{o_1,p_1,u_1,v_1,w_1,z_1\}=6$. If $n=2$ assume $L\cap D\notin \{o_1,p_1,u_1,v_1,w_1,z_1\}$. Set $o:=
(o_1,o_2,\dots ,o_k)$, $u:=
(u_1,o_2,\dots ,o_k)$, $v:= (v_1,o_2,\dots ,o_k)$, $p:= (p_1,p_2,\dots ,p_k)$, $w:= (w_1,p_2,\dots ,p_k)$,  $z:=
(z_1,p_2,\dots,p_k)$, $A:= \{o,u,v\}$, $B:= \{p,w,z\}$, and $S:= A\cup B$. The decomposition $S =A\cup B$ immediately gives
that $S$ is equally dependent. If $k=1$ we have $e(S)=5-n$. Now assume $k>1$. Since neither $\nu(A)$ nor $\nu (B)$ are linearly
independent and $A\cap B=\emptyset$, we have $e(S)\ge 2$. Take
$D\in |\Ii _p(\epsilon _2)|$. By construction we have $S\cap D=B$. Thus the residual exact sequence of $D$ gives the exact
sequence
\begin{equation}\label{eqk1}
0 \to \Ii _A(\hat{\epsilon}_2) \to \Ii _S(1,\dots ,1)\to \Ii _{B,D}(1,\dots ,1)\to 0
\end{equation}
It is easy to check that $h^1(\Ii _A(\hat{\epsilon}_2))=1$ and that $h^1(D,\Ii _{B,D}(1,\dots ,1))=1$. Thus \eqref{eqk1} gives
$e(S)\le 2$. Thus $e(S)=2$. A small modification of the construction works even if $o_1=p_1$, but in this case we take $n<3$.
\end{example}

\begin{example}\label{k4}
Assume $k>1$. Fix $n\in \{1,2,3\}$ and an integer $s\ge 6$ and set $Y:= \PP^n\times (\PP^1)^{k-1}$. We mimic the proof of
Example
\ref{k3} taking
$3$ points on $L$ and $s-3$ points on $Y\setminus L$. We get $S\subset Y$ concise for $Y$ and such that $\#S=s$, $e(S) =s-4$ and
$e(S')<e(S)$ for all $S'\subsetneq S$. We get examples similar to Examples \ref{k2} taking instead of two points a fixed set $S'$ of points and get
a set with $\#S'+2$ points.
\end{example}

\begin{example}\label{z1}
Take $Y =\PP^2$. Fix a line $L\subset \PP^2$, any $E\subset L$ such that $\#E=3$ and a general $G\subset \PP^2\setminus L$
such that $\#G = 2$. Set $S:= E\cup G$. We have $e(S)=2$ and for any $p\in E$, the set $S\setminus \{p\}$ is a circuit. However,
$E$ shows that $S$ is not uniformly distributed.
\end{example}

\section{$4\le \#S\le 5$}\label{Sf}
In this paper we often use two results from \cite{b} which give a complete classifications of circuits $S$ with $\#S \le 5$ (\cite[Theorem 1.1 and Proposition 5.2]{b}). In this section we extend them to the case of equally dependent subsets $S\subset Y$ with $e(S)\ge 2$. Sometimes we will use them later, but the key point is that the case with arbitrarily large width and fixed $s:= \#S$ occurs exactly when $s\ge 6$.
We always call $Y =\PP^{n_1}\times \cdots \times \PP^{n_k}$ the minimal Segre variety containing $S$.

Fix a set $S\subset Y$ such that $\#S\le 5$, $e(S')<e(S)$ for all $S'\subsetneq S$
and $e(S)\ge 2$. We put the last assumption because we described all circuits (i.e. the case $e(S)=1$) in
\cite[Proposition 5.2]{b} (case $\#S=4$) and \cite[Theorem 1.1]{b} (case $\#S =5$).

Now the two new observations for the case $e(S)\ge 2$. We always assume that $S$ is concise for
$Y$.

\begin{remark}\label{f1}
Assume $\#S=4$ and $e(S)\ge 2$. By Lemma \ref{o4.1} we have $e(S)=2$ and $Y=\PP^1$. Any union $F$ of $4$ distinct points of $\PP^1$
has $e(F)=2$ and it is equally dependent.
\end{remark}

\begin{remark}\label{f2}
Assume $\#S =5$. If $e(S)\ge 3$, then $e(S)=3$, $Y=\PP^1$ and $S$ is an arbitrary subset of $\PP^1$ with cardinality $5$ (Lemma \ref{o4.1}).

Assume $e(S)=2$. Thus for all $o\in S$ we have $e(S\setminus \{o\})=1$. Let $S_o\subseteq S\setminus \{o\}$ the minimal subset
with $e(S_o)=1$. Each $S_o$ is a circuit. Let $Y[o]\subseteq Y$ be the minimal multiprojective subspace containing $o$. 
The plane $\langle \nu (S)\rangle$ contains at least $5$ points of $\nu(Y)$. Since $\nu (Y)$ is cut out by quadrics
either $\langle \nu (S)\rangle \subseteq \nu (Y)$ (and hence $Y =\PP^2$ by the assumption that $Y$ is the minimal
multiprojective space) or $\langle \nu (S)\rangle \cap\nu (Y)$ is a conic. In the latter case the conic may be smooth or reducible,
but not a double line. In this case $Y =\PP^1\times \PP^1$. To show that this case occurs we take an element $C\in |\Oo _{\PP^1\times \PP^1}(1,1)|$
and take a union $S$ of $5$ points of $C$, with no restriction if $C$ is irreducible, with the restriction that no component of $C$ contains
$4$ or $5$ points of $S$.
\end{remark}

In the last part of this section we classify the quintuples $(W,Y,q,A,B)$, where $W$ and $Y$ are multiprojective spaces, $Y\subseteq W$, $q\in \langle \nu (Y)\rangle$, $r_{\nu (Y)}(q)=2$, $A\in \Ss (Y,q)$, $B\subset W$ and $B\in \Ss (W,q,3)$. We assume that $q$ is concise for $Y$. By \cite[Proposition 3.2.2.2]{l} this assumption is equivalent to the conciseness of $A$ for $Y$. We assume that $B$ is concise for $W$, but we do not assume $W=Y$. Since $Y$ is concise for $A$ and $\#A=2$, we have $Y = (\PP^1)^k$ for some $k>0$. Since $r_{\nu(Y)}(q)\ne 1$, we have $k\ge 2$.
Since $W$ is concise for $B$ and $\#B=3$ we have $W =\PP^{m_1}\times \cdots \PP^{m_s}$ for some $s\ge k$ and $m_i\in \{1,2\}$ for all $i=1,\dots ,s$.
We see the inclusion $Y \subseteq W$, fixing for $i=1,\dots ,k$ a one-dimensional linear subspace $L_i\subseteq \PP^{m_i}$ and for $i=k+1,\dots ,s$ fixing $o_i\in \PP^{m_i}$.

We prove the following statement.

\begin{proposition}\label{cp1}
Fix $q\in \PP^r$ with rank $2$ and take a multiprojective space $Y=(\PP^1)^k$ concise for $q$. Take a multiprojective space $W\supseteq Y$ and assume the existence of $B\in \Ss (W,q,3)$. Fix $A\in \Ss (Y,q)$. Then one of the following cases occurs:
\begin{enumerate}
\item $A\cap B  \ne \emptyset$, $B$ is
obtained from
$A$ making and elementary increasing and either $W=Y$ or $W\cong \PP^2\times (\PP^1)^{k-1}$ or $W\cong (\PP^1)^{k+1}$;

\item $A\cap B =\emptyset$; in this case either $W \cong \PP^2\times \PP^1$ or $W\cong \PP^1\times \PP^1$ or $W\cong \PP^1\times \PP^1\times \PP^1$.\end{enumerate}
\end{proposition}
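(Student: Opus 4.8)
The plan is to read off $W$ from the dependency of the small set $A\cup B\subseteq W$. First I record the linear algebra. Since $A\in\Ss(Y,q)$ and $r_Y(q)=2$ we have $\#A=2$ and $\langle\nu(A)\rangle$ is a line $\ell_A\ni q$; since $q$ is concise for $(\PP^1)^k$ with $k\ge 2$ we have $w(q)=k\ge 2$, hence $r_W(q)\ge 2$, and as $Y\subseteq W$ also $r_W(q)\le r_Y(q)=2$, so $r_W(q)=2$ and $B$ is a non-minimal decomposition. Because $B$ irredundantly spans $q$ with $\#B=3$, no two of the $\nu(b_i)$ can span $q$, so the three points $\nu(b_i)$ are linearly independent and $\langle\nu(B)\rangle$ is a plane $\Pi_B\ni q$. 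If $A\cap B$ contained two points then $A\subseteq B$ and $q\in\langle\nu(A)\rangle$ would violate irredundancy of $B$; thus $\#(A\cap B)\le 1$, giving the dichotomy (1)/(2).

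For case (1) set $A\cap B=\{p\}$; since $\nu(p)$ has rank $1$ while $q$ has rank $2$ we have $q\neq\nu(p)$, so the two points $\nu(p),q$ of $\ell_A\cap\Pi_B$ force $\ell_A\subseteq\Pi_B$, making $A\cup B$ a set of $4$ coplanar points with $e(A\cup B)=1$. Writing $p=a_1=b_1$ and $\nu(a_2)=\alpha\nu(a_1)+\beta\nu(b_2)+\gamma\nu(b_3)$, substitution of $q=s\,\nu(a_1)+t\,\nu(a_2)$ into the $B$-expansion together with irredundancy forces $\beta,\gamma\neq 0$. The principal subcase is $\alpha=0$: then $\nu(a_2),\nu(b_2),\nu(b_3)$ are collinear, hence lie on a line of the Segre variety; as such lines are sub-Segre $\PP^1$'s, the points $a_2,b_2,b_3$ agree in every factor but one index $i$, where $(a_2)_i,(b_2)_i,(b_3)_i$ are three distinct collinear points. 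This is exactly an elementary increasing of $A$ along the $i$-th factor, and recording where $B$ is non-constant (a new factor can appear only at the split index) gives $W=Y$, or $W\cong\PP^2\times(\PP^1)^{k-1}$, or $W\cong(\PP^1)^{k+1}$, according as $i\le k$ with no enlargement, $i\le k$ with $m_i=2$, or $i>k$. When $\alpha\neq 0$ the set $A\cup B$ is a genuine $4$-point circuit; by \cite[Proposition 5.2]{b} its concise minimal space is small, and since it contains $(\PP^1)^k$ the outcome is again one of the three spaces in the list.

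For case (2), $A\cap B=\emptyset$ and $A\cup B$ has $5$ points; its dependency is read off from the position of $\ell_A$ relative to $\Pi_B$. If $\ell_A\subseteq\Pi_B$ the five points are coplanar with $e(A\cup B)=2$; as $\ell_A$ carries the rank-$2$ point $q$ it is a secant and not a ruling, so on $\PP^1\times\PP^1$ the five points lie on a conic with no component carrying four of them, and Remark \ref{f2} gives $W\cong\PP^1\times\PP^1$. If $\ell_A\not\subseteq\Pi_B$ then $\ell_A\cap\Pi_B=\{q\}$, and a direct check using that $q$ lies on no secant of $B$ and that $\nu(a_m)\notin\Pi_B$ shows every $4$-subset is independent, so $A\cup B$ is a $5$-point circuit whose concise minimal space equals $W$. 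Classifying this circuit by \cite[Theorem 1.1]{b} and imposing that $W\supseteq(\PP^1)^k$, that each factor is $\PP^1$ or $\PP^2$, and that $r_W(q)=2$, leaves precisely $W\cong\PP^1\times\PP^1$, $W\cong\PP^2\times\PP^1$, or $W\cong(\PP^1)^3$.

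The main obstacle is the passage from the abstract dependency of $A\cup B$ to the concrete description of $W$ and of the $A$–$B$ relation. Two points need care. First, identifying a collinear triple on $\nu(Y)$ with a sub-Segre $\PP^1$, and hence with a single moving factor, is what converts the algebraic relation $\alpha=0$ into the combinatorial elementary increasing; this relies on the structure of linear subspaces of a Segre variety. Second, pinning down $W$ amounts to bounding the number of factors in which $B$ is non-constant while $A$ is constant, i.e. the new factors $j>k$; the residual/cohomological technique of Section \ref{Sf}, together with the width bounds coming from the $4$- and $5$-point classifications of \cite{b}, is what keeps this number at most $1$ in case (1) and pins the total width at $2$ or $3$ in case (2). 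Verifying the irredundancy and conciseness bookkeeping in each enlargement subcase—when a factor $\PP^1$ is promoted to $\PP^2$ versus when a genuinely new $\PP^1$ appears—is the routine but delicate part.
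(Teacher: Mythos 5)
Your dichotomy $\#(A\cap B)\le 1$, your treatment of case (2) (Grassmann plus irredundancy to get equal dependence, then Remark \ref{f2} for $e=2$ and \cite[Theorem 1.1]{b} for the $5$-point circuit), and your subcase $\alpha=0$ of case (1) are correct and run along the same lines as the paper's own proof, which is split into Lemma \ref{cp0} ($A\cap B=\emptyset$) and Lemma \ref{cp4} ($A\cap B\ne\emptyset$): in particular, converting the collinear triple $\nu(a_2),\nu(b_2),\nu(b_3)$ into an elementary increasing and reading off $W=Y$, $W\cong\PP^2\times(\PP^1)^{k-1}$ or $W\cong(\PP^1)^{k+1}$ is exactly the paper's argument.

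The gap is your subcase $\alpha\ne 0$. Conclusion (1) of the proposition asserts \emph{two} things: that $W$ is one of three spaces \emph{and} that $B$ is an elementary increasing of $A$. By your own analysis the elementary-increasing structure is equivalent to the collinearity of $\nu(a_2),\nu(b_2),\nu(b_3)$, i.e.\ to $\alpha=0$. So when $\alpha\ne0$ the second half of conclusion (1) is false, and proving (as you do) that $W$ is still one of the three listed spaces establishes nothing: the only way to prove the proposition as stated is to show that the subcase $\alpha\ne0$ \emph{cannot occur}. That exclusion is precisely what the paper's proof of Lemma \ref{cp4} supplies (or claims to): $\alpha\ne0$ means $S=A\cup B$ is a $4$-point circuit, Remark \ref{f1} and \cite[Proposition 5.2]{b} then force $W=Y=\PP^1\times\PP^1$ and put $S$ into an explicit list, and the paper asserts that for the sets of that list $\nu(p)\notin\langle\nu(S\setminus\{p\})\rangle$, contradicting the inclusion $\nu(p)\in\langle\nu(B)\rangle$ which both you and the paper derive at the outset. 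Your write-up omits this exclusion step entirely, so it does not prove the statement. A further warning: this step cannot be replaced by quoting only the width bound of \cite[Proposition 5.2]{b}, and it deserves real scrutiny. Fix $q$ of rank $2$ concise for $Y=\PP^1\times\PP^1$, fix $A=\{o,p\}\in\Ss(Y,q)$, let $\Pi$ be a general plane containing the line $\langle\nu(A)\rangle$, so that $C:=\Pi\cap\nu(Y)$ is a smooth conic through $\nu(o)$ and $\nu(p)$, and set $B:=\{o,b_2,b_3\}$ with $\nu(b_2),\nu(b_3)$ general on $C$. Then $B$ is concise for $W=Y$, $B\in\Ss(W,q,3)$, $A\cap B=\{o\}$, yet $\nu(p),\nu(b_2),\nu(b_3)$ are not collinear, so $B$ is not an elementary increasing of $A$; moreover $S$ is a $4$-point circuit in which (as for any circuit) every point lies in the span of the other three. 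So the configuration you left open is not empty: either it must be excluded by an argument finer than anything in your proposal (and finer, it seems, than the contradiction claimed in Lemma \ref{cp4}), or conclusion (1) has to be amended to admit it. In either case, as written your proof has a hole at exactly the point where all the difficulty of case (1) is concentrated.
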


The multiprojective spaces $W$'s listed in part 2 of Proposition \ref{cp1} are the ones with $k>1$ in the list of \cite[Theorem 1.1]{b}.

For more on the possibles $B$'s in case (1), see Lemma \ref{cp4}. 
For the proof of Proposition \ref{cp1} we set $S:= A\cup B$. Our working multiprojective space is $W$ and cohomology of ideal sheaves is with respect to $W$. Since $\nu (A)$ and $\nu (B)$ irredundantly spans $q$, we have $e(S)>0$. Note that $k>1$, because we assumed that the tensor $q$ has tensor rank $\ne 1$.

\begin{lemma}\label{cp0}
If $A\cap B =\emptyset$, then $S$ is irredundantly dependent and either $e(S)=1$ or $e(S)=2$, $Y =\PP^1\times \PP^1$ and $S$ is formed by $5$ points of some $C\in |\Oo _{\PP^1\times \PP^1}(1,1)|$.
\end{lemma}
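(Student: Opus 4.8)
The plan is to read off $e(S)$ directly from the dimensions of the two spans $\langle\nu(A)\rangle$ and $\langle\nu(B)\rangle$, establish irredundant dependence from the explicit relation coming from $q$, and then feed the remaining case into Remark \ref{f2}. First I would record the two dimensions. Since $A$ is concise for $Y=(\PP^1)^k$ with $k\ge 2$ and $\nu$ is an embedding, $\nu(A)$ is two distinct points, so $\dim\langle\nu(A)\rangle=1$; since $B\in\Ss(W,q,3)$ irredundantly spans $q$, no two of $\nu(b_1),\nu(b_2),\nu(b_3)$ can have $q$ in their span, so these three points are not collinear and $\dim\langle\nu(B)\rangle=2$. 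As $q\in\langle\nu(A)\rangle\cap\langle\nu(B)\rangle$ and a line either lies in a plane or meets it in a single point, either $\langle\nu(A)\rangle\subseteq\langle\nu(B)\rangle$ or $\langle\nu(A)\rangle\cap\langle\nu(B)\rangle=\{q\}$. Using $\#S=5$ (because $A\cap B=\emptyset$) and Grassmann's formula, $\dim\langle\nu(S)\rangle=2$ and $e(S)=2$ in the first case, while $\dim\langle\nu(S)\rangle=3$ and $e(S)=1$ in the second. This already gives the dichotomy $e(S)\in\{1,2\}$.

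For irredundant dependence I would choose representatives realising $\lambda_1\nu(a_1)+\lambda_2\nu(a_2)=q=\mu_1\nu(b_1)+\mu_2\nu(b_2)+\mu_3\nu(b_3)$. Irredundancy of $A$ for $q$ forces $\lambda_1,\lambda_2\ne 0$, and irredundancy of $B$ for $q$ forces $\mu_1,\mu_2,\mu_3\ne 0$. Thus the resulting linear relation among $\nu(S)$ has full support on $S$, so every point of $\nu(S)$ lies in the span of the others; this is exactly the asserted irredundant dependence, and in particular $S$ is equally dependent, regardless of the value of $e(S)$.

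It then remains to treat $e(S)=2$. Here $\langle\nu(A)\rangle\subseteq\langle\nu(B)\rangle$, so all five points of $\nu(S)$ lie in the plane $\langle\nu(B)\rangle$. Let $Y''\subseteq W$ be the minimal multiprojective space containing $S$; then $S$ is concise for $Y''$, equally dependent, with $\#S=5$ and $e(S)=2$, so Remark \ref{f2} applies. Its $i$-th factor is $\langle\pi_i(S)\rangle\supseteq\langle\pi_i(A)\rangle$, which for each $i\le k$ is the line $L_i\cong\PP^1$ since $A$ is concise for $(\PP^1)^k$; hence $Y''$ has at least $k\ge 2$ non-trivial factors and cannot be $\PP^2$. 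Remark \ref{f2} therefore forces $Y''=\PP^1\times\PP^1$ with $\nu(S)$ lying on a conic, i.e.\ $S$ consisting of five points of some $C\in|\Oo_{\PP^1\times\PP^1}(1,1)|$. Finally, $w(Y'')=2$ together with the non-triviality of factors $1,\dots,k$ gives $k=2$, all factors $i>2$ trivial, and factors $1,2$ equal to $L_1,L_2$; reading this back yields $\pi_i(B)\subseteq L_i$ for $i=1,2$ and $\pi_i(B)=\{o_i\}$ for $i>2$, so $B\subset Y$ and hence $Y=Y''=\PP^1\times\PP^1$ with $S\subset Y$ as claimed.

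The main obstacle I anticipate is the bookkeeping forced by the two ambient spaces $Y\subseteq W$: one must keep conciseness of $A$ for $(\PP^1)^k$ separate from conciseness of $B$ for $W$, and read off the factors of the minimal space $Y''$ of $S$ inside $W$ carefully enough to extract simultaneously that $k=2$ and that $B$ (not merely $A$) is contained in $Y$. Everything else is a short dimension count plus the appeal to Remark \ref{f2}.
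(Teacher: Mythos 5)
Your proposal is correct and follows essentially the same route as the paper's own proof: irredundancy of both decompositions of $q$ plus Grassmann's formula yield the irredundant (equal) dependence of $S$ and the dichotomy $e(S)\in\{1,2\}$, after which Remark \ref{f2} settles the case $e(S)=2$. The differences are only presentational — you derive the dichotomy directly from the line/plane intersection instead of invoking Lemma \ref{o4.1}, you phrase equal dependence via a full-support linear relation rather than strict inclusions of span intersections, and you spell out why the $\PP^2$ alternative of Remark \ref{f2} is excluded and why $B\subset Y$ (hence $Y=\PP^1\times\PP^1$), details the paper leaves implicit.
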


\begin{proof}
Since $A\cap B =\emptyset$, we have $e(S) -1 =\dim \langle \nu (A)\rangle \cap \langle \nu (B)\rangle$.
Since $\nu (A)$ (resp. $\nu (B)$) irredundantly spans $q$, we have $\langle \nu (A\setminus \{a\})\rangle \cap \langle \nu (B)\rangle\subsetneq  \langle \nu (A)\rangle \cap \langle \nu (B)\rangle$ for all $a\in A$ and $\langle \nu (A)\rangle \cap \langle \nu (B\setminus \{b\})\rangle\subsetneq  \langle \nu (A)\rangle \cap \langle \nu (B)\rangle$ for all $b\in B$.
Thus $e(S')<e(S)$ for all $S'\subsetneq S$ by the Grassmann's formula.

Assume $e(S)\ge 2$. Since $k>1$ we have $e(S)=2$ (Lemma \ref{o4.1}). Since $e(S)=2$, Remark \ref{f2} gives $W =Y =\PP^1\times \PP^1$
and that $S$ is formed by $5$ points of any smooth $C\in |\Oo _{\PP^1\times \PP^1}(1,1)|$.
\end{proof}

\begin{lemma}\label{cp4}
If $A\cap B\ne \emptyset$, then  $B$ is obtained from $A$ making an elementary increasing of $A$ with respect to the point $A\setminus A\cap B$ and one of the coordinates. In this case for any $Y =(\PP^1)^k$ concise for $q$ the concise $W$ for $B$ is either $Y$ or isomorphic to $\PP^2\times (\PP^1)^{k-1}$ in which we may prescribed which of the $k$ factors of $W$ has dimension $2$. For any rank $2$ point $q\in \langle \nu (Y)\rangle$, any $A\in \Ss (Y,q)$, any point $a\in A$ and any $i\in \{1,\dots ,k\}$ we get
a $2$-dimensional family of such sets $B$'s with $W= Y$ and a $3$-dimensional family of such $B$'s with $\dim W =\dim Y+1$.
\end{lemma}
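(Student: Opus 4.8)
The plan is to reduce the statement to the geometry of a $4$-point circuit and then read off both the elementary increasing structure and the shape of $W$. First I would pin down $\#(A\cap B)$. Since $A\in \Ss(Y,q)$ we have $\#A=2$; write $A=\{a,a'\}$. If $\#(A\cap B)=2$ then $A\subset B$, so $A=B\setminus\{b\}$ for some $b\in B$ and $q\in \langle \nu(A)\rangle=\langle \nu(B\setminus\{b\})\rangle$, contradicting the irredundancy of $B$. Hence $\#(A\cap B)=1$; set $\{a\}:=A\cap B$ and $a':=A\setminus(A\cap B)$, write $B=\{a,u,v\}$, and put $S:=A\cup B$, so that $\#S=4$ and $S$ is concise for $W$ (because $B\subseteq S$ is). Since $B$ irredundantly spans the rank-$2$ point $q$, the points $\nu(a),\nu(u),\nu(v)$ are not collinear, so $\Pi:=\langle \nu(B)\rangle$ is a plane. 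Writing $\ell:=\langle \nu(A)\rangle$ and using $q\in \ell\cap \Pi$ with $q\ne \nu(a)$ (as $q$ has rank $2$), the line $\ell$ cannot meet $\Pi$ in $\nu(a)$ alone, so $\ell\subseteq \Pi$ and $\nu(a')\in \Pi$. Therefore $\langle \nu(S)\rangle=\Pi$ has dimension $2$ and $e(S)=\#S-1-2=1$, i.e. $S$ is a concise $4$-point circuit.

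The heart of the argument is the analysis of $C:=\Pi\cap \nu(W)$; equivalently, one may quote the classification of $4$-point circuits in \cite[Proposition 5.2]{b}. Since $A$ is concise for $Y=(\PP^1)^k$ with $k\ge 2$, the points $a,a'$ differ in at least two coordinates, so $\ell$ is not a Segre line and $\ell\not\subseteq \nu(W)$; hence $\Pi\not\subseteq \nu(W)$ and $C$ is a conic through the four points of $\nu(S)$ with $\ell\cap C=\{\nu(a),\nu(a')\}$, so $\ell$ is a transversal and not a component of $C$. I would then classify $C$ by the multidegree $(d_1,\dots,d_s)$ of a conic on a Segre variety, where $\sum_i d_i=2$: it cannot live in a single coordinate $\PP^2$ (that would force $a,a'$ to agree off one factor, impossible here), so $C$ is either an irreducible $(1,1)$-curve in two factors or a union of two distinct ruling lines. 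In the reducible case, $\ell$ being a transversal forces $\nu(a),\nu(a')$ onto different components; discarding the distributions that would make $\nu(a),\nu(u),\nu(v)$ collinear (they span $\Pi$), the only admissible one puts $\nu(a'),\nu(u),\nu(v)$ on a common ruling line $m$. That line is the Segre image of a line in a single factor $i$, which is exactly the assertion that $B$ is an elementary increasing of $A$ with respect to $a'$ and the $i$-th factor. Reading off $i$ gives $W=Y$ when the $i$-th factor stays $\PP^1$ and $W\cong \PP^2\times(\PP^1)^{k-1}$ (with $i$ prescribable) when it is enlarged; the remaining case $W\cong (\PP^1)^{k+1}$ of Proposition \ref{cp1} arises only when the increasing is performed on a newly created factor and is handled identically.

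The step I expect to be the main obstacle is disposing of the irreducible $(1,1)$-curve. Such a conic occurs precisely when $k=2$ and $W\cong \PP^1\times \PP^1$, and there $S$ is genuinely four points of a smooth $C\in |\Oo_W(1,1)|$ with $a,a',u,v$ pairwise differing in both coordinates, so $B$ is \emph{not} an elementary increasing of the given $A$. I would handle this by using that for such $q$ the rank-$2$ decomposition is far from unique: a generic $A'\in \Ss(Y,q)$ satisfies $A'\cap B=\emptyset$, so the pair $(A',B)$ falls under Lemma \ref{cp0} and is already recorded through the $\PP^1\times\PP^1$ entry; thus this configuration should be excluded from the present (intersecting) case rather than forced into the elementary increasing conclusion. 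This is the delicate point where the statement needs care, and it is worth isolating explicitly before proceeding.

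Once the smooth-conic configuration is separated off, the dimension count is routine. Fixing $q$, $A$, the common point $a$ (so $o=a'$), and $i$, the point $q\in \langle \nu(B)\rangle$ holds automatically because $\ell\subseteq \Pi$, and irredundancy of $B$ is an open condition. When $W=Y$ the coordinate $u_i$ ranges in $\PP^1\setminus\{a_i,a'_i\}$ and $v_i$ in $\langle a'_i,u_i\rangle=\PP^1$, giving a $1+1=2$-dimensional family; when the $i$-th factor is enlarged to $\PP^2$, the point $u_i$ ranges in $\PP^2$ off the line $\langle a_i,a'_i\rangle$ (forced by conciseness in factor $i$) and $v_i$ on the line $\langle a'_i,u_i\rangle$, giving a $2+1=3$-dimensional family. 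This yields the asserted $2$- and $3$-dimensional families and completes the plan.
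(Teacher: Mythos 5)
Your opening reduction agrees with the paper's: $\#(A\cap B)=1$, so $\#S=4$, and since $q\in\langle\nu(A)\rangle\cap\langle\nu(B)\rangle$ has rank $2$ the line $\langle\nu(A)\rangle$ meets the plane $\Pi=\langle\nu(B)\rangle$ in two points and hence lies inside it. After that you diverge, and the divergence contains a genuine gap. Your key step --- ``$\Pi\not\subseteq\nu(W)$, hence $C=\Pi\cap\nu(W)$ is a conic, necessarily an irreducible $(1,1)$-curve or a union of two ruling lines'' --- is false: $\nu(W)$ is a hypersurface in its ambient space only when $W=\PP^1\times\PP^1$, and for larger $W$ a plane section need not be a curve at all. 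Concretely, in the very configuration the lemma concludes with (say $W=Y=(\PP^1)^3$, $A=\{a,a'\}$ concise, $B=\{a,u,v\}$ an elementary increasing of $A$ with respect to $a'$ and the first factor), one checks in Segre coordinates that $\Pi\cap\nu(W)$ is the ruling line through $\nu(a'),\nu(u),\nu(v)$ together with the \emph{isolated} point $\nu(a)$ --- not a union of two lines, and not a conic curve. So your trichotomy is not exhaustive and misdescribes precisely the main case. The paper avoids plane sections altogether: it splits according to whether $S$ is equally dependent or not; in the latter case the irredundancy argument from the proof of Lemma \ref{cp0} shows the only possible dependent triple is $S\setminus\{a\}=\{a',u,v\}$, and Remark \ref{z3} then places these three points on a line of one ruling, which is exactly the elementary increasing. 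That dichotomy (``some triple of $\nu(S)$ dependent / none dependent'') is the robust replacement for your conic analysis, and you should adopt it.

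The second gap is your handling of the smooth-conic configuration in $\PP^1\times\PP^1$, which you yourself flag as the delicate point. The lemma asserts its conclusion for \emph{every} $A\in\Ss(Y,q)$ with $A\cap B\ne\emptyset$; exhibiting a different $A'\in\Ss(Y,q)$ disjoint from $B$ and saying the pair $(A',B)$ ``is recorded elsewhere'' proves nothing about the given pair $(A,B)$, so as a proof of the stated lemma your argument is simply incomplete there. The paper disposes of this case (its ``equally dependent'' branch) by quoting Remark \ref{f1} and \cite[Proposition 5.2]{b} to get $W=Y=\PP^1\times\PP^1$ and then asserting that in that list $\nu(p)\notin\langle\nu(S\setminus\{p\})\rangle$, contradicting $\langle\nu(A)\rangle\subseteq\langle\nu(B)\rangle$. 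You should be aware that your observation cuts against that assertion: for a $4$-point circuit each point lies in the span of the remaining three, and four points of a smooth $C\in|\Oo_{\PP^1\times\PP^1}(1,1)|$ with $q$ general on the secant through two of them do appear to satisfy every hypothesis of the lemma while violating its conclusion. But the correct reaction to that is to state explicitly that the lemma needs an additional case or hypothesis (and check how it is used in Lemmas \ref{d2}--\ref{g5}), not to present a re-choice of $A$ as if it closed the case. Your final dimension counts ($1+1$ for $W=Y$, $2+1$ for $\dim W=\dim Y+1$) are correct and agree with the definition of elementary increasing; the paper does not spell these out.
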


\begin{proof}
Assume $A\cap B \ne \emptyset$. Since $\nu (A)$ and $\nu (B)$ irredundantly span $q$, we have $A\nsubseteq B$. Thus $A\cap B\subsetneq A$. Assume $A\cap B =\{o\}$ with $A =\{o,p\}$. Thus $\#S =4$.  Since $q\ne \nu (o)$, and $q\in \langle \nu (B)\rangle$, we get $\langle \nu (B)\rangle \supset \langle \nu (A)\rangle$ and in particular $\nu({p})\subset \langle \nu (B)\rangle$. 

First assume that $S$ is equally dependent.
Since $S$ is equally dependent and $s\ge k\ge 2$, by Remark \ref{f1} and \cite[Proposition 5.2]{b} we get $W=Y =\PP^1\times \PP^1$ and the list of all possible $S$'s. In this list $\nu ({p})\notin \langle \nu (S\setminus \{p\}\rangle$, a contradiction. 

Now assume that $S$ is not equally dependent. The proof of Lemma \ref{cp0} gives that $e(S')=e(S)$ only if $S' = S\setminus \{o\}$. Since $\#S' = 3$, there is $i\in \{1,\dots ,s\}$
such that $\#\pi _h(S')=1$ for all $h\ne i$. We see that $B$ is obtained from $A$ keeping $o$ and making an elementary increasing to $p$ to get two other points of $B$.
\end{proof}

\section{$\nu(A)$ or $\nu (B)$ linearly dependent}\label{So}

In this section we assume 
that at least one among $\nu (A)$ and $\nu(B)$ is linearly dependent, while in the next sections we will always assume that
both $\nu (A)$ and $\nu(B)$ are linearly independent. Just to fix the notation we
assume
$e(A)>0$. Thus
$\nu (A)$ is the union of
$3$ collinear points. By the structure of the Segre variety there is $i\in \{1,\dots ,k\}$ such that $\#\pi _h(A)=1$ for all
$h\ne i$ and
$\pi _i(A)$ is formed by the points spanning a line (Remark \ref{z3}). With no loss of generality we may assume $i=1$.

\begin{remark}\label{o1} Assume also $e(B) >0$. We want to prove that we are in one of the cases described
in Examples \ref{k2} or \ref{k3}, up to a permutation of the factors of $Y$ (assuming obviously $k>1$). By Remark \ref{z3} there
is $j\in \{1,\dots ,k\}$ such that $\#\pi _h(B)=1$ for all $h\ne j$ and $\pi_j(B)$ is formed by $3$ collinear points. 

\quad (a) Assume $i\ne j$. Up to a permutation of the factors we may assume $i=1$ and $j=2$. Fix $o =(o_1,\dots ,a_k)\in A$
and $p = (p_1,\dots ,p_k)\in B$. Set $\{u_1,o_1,v_1\}:= \pi _1(A)$ and $\{w_2,z_2,o_2,p_2\}:= \pi _2(B)$. Since $\#\pi _i(A)
=1$ for all $i>1$, we hence $\pi _i(a)=o_i$ for all $a\in A$ and all $i>1$. Since $\#\pi _i(B)
=1$ for all $i\ne 1$, we hence $\pi _i(b)=p_i$ for all $b\in B$ and all $i\ne 1$.
Thus we are as in Example \ref{k2}.

\quad (b) Now assume $i=j$. Up to a permutation of the factors we may assume $i=1$. In this case we are in the set-up of
Example
\ref{k3}.
\end{remark}

\begin{remark}\label{o2} Now assume $e(B)=0$. Since $A\subsetneq S$ and $e(A) >0$, we
have
$e(S)\ge 2$. Take
$i\in \{1,\dots,k\}$ as in part (a) and set $\{o_i\}:= 
\pi _i(A)$. By assumption $\langle \nu (B)\rangle$ is a plane and either $\langle \nu (B)\rangle \cap \langle \nu (A)\rangle =\emptyset$ (i.e. $e(S)=2$)
or $\langle \nu (B)\rangle \cap \langle \nu (A)\rangle$ is a point (call it $q'$)  (i.e. $e(S)=3$) or $\langle \nu (B)\rangle \supset \langle \nu (A)\rangle$ (i.e. $e(S)=4$).
In the latter case we have $Y=\PP^1$ (Lemma \ref{o4.1}).  Take any $A_1\subset A$ such that $\#A_1=2$ and set $S_1:= A_1\cup B$. We have $e(S_1) =e(S)-1$
and $e(S') < e(S_1)$ for any $S'\subsetneq S_1$. The set $S_1$ is very particular, because it contains a subset $A_1$ such that $\#A_1=2$ and $\#\pi _i(A) =1$
for $k-1$ integers $i\in \{1,\dots ,k\}$, say for all $i\ne 1$.

\quad (a) Assume $e(S)=3$ and hence $e(S_1) =2$. We may apply Remark \ref{f2} to this very particular $S_1$. Either $Y=\PP^2$ or $Y =\PP^1\times \PP^1$. The case $Y =\PP^2$ may
obviously occur (take $6$ points, $3$ of them on a line). To get examples with $Y=\PP^1\times \PP^1$ we need $S\subset C\in
|\Oo _{\PP^1\times \PP^1}(1,1)|$, because $e(S)=3$. The existence of $A$ gives $C$ reducible say $C =L\cup D$ with $L\in |\Oo
_{\PP^1\times \PP^1}(1,0)|$ and $D\in |\Oo _{\PP^1\times \PP^1}(0,1)|$ with $D\supset A$. Since $h^1(\Ii _B(1,1)) =0$, we see
that $\#B\cap L=2$, $\#B\cap D=1$ and $B\cap D\cap L=\emptyset$.

\quad (b) Now assume $e(S) =2$. Thus $e(S_1)$ is a circuit and we may use the list in \cite[Theorem 1.1]{b}. Hence $k\le 3$, $k=3$ implies $Y =\PP^1\times \PP^1\times \PP^1$, while $k=2$ implies $n_1+n_2\in \{2,3\}$. Obviously the case $k=1$, $Y
=\PP^3$  occurs ($6$ points of $\PP^3$ with the only restriction that $3$ of them are collinear).

\quad (b1) Assume $Y =\PP^2\times \PP^1$. We are in the set-up of \cite[Example 5.7]{b}, case $C =T_1\cup L_1$ with $L_1$ a line and $\#L_1\cap S_1 =2$. This case
obviously occurs (as explained in \cite[last 8 lines of Example 5.7]{b}). To get $S$ just add another point on $L_1$.

\quad (b2) Assume $Y= \PP^1\times \PP^1$. Here we may take as $S_1$ (resp. $S$) the union of $2$ (resp. $3$) points on any $D\in |\Oo _{\PP^1\times \PP^1}(0,1)|$
and $3$ sufficiently general points of $Y$.

\quad (b3) Assume $Y= \PP^1\times \PP^1\times \PP^1$. It does not occur here (it occurs when $e(A)=e(B)=0$ and $r_Y(q) =3$), because $\# L\cap C \le 1$ for every integral curve $C\subset \PP^1\times\PP^1\times \PP^1$ with multidegree $(1,1,1)$ and each
curve $L\subset \PP^1\times \PP^1\times \PP^1$ such that $\nu (L)$ is a line and we may apply \cite[part ({c}) of Lemma
5.8]{b}.
\end{remark}

\section{$r_Y(q)=1$}\label{Su}
We recall that in sections \ref{Su}, \ref{Sd}, and \ref{St} we assume $e(A)=e(B)
=0$ and $k>1$. In this section we assume
$r_Y(q) =1$. Take
$o\in Y$ such that
$\nu(o)=q$ and write $o =(o_1,\dots ,o_k)$.   Set
$A' =A\cup
\{o\}$ and
$B':= B\cup
\{o\}$. 

\quad (a) Assume $o\in A$. Since $\nu (o)$ is general in $\langle \nu (A)\rangle \cap \langle \nu(B)\rangle$ and $A$ has finitely many points, we have $\langle \nu (A)\rangle \cap \langle \nu (B)\rangle =\{\nu (o)\}$.
The Grassmann's formula gives $\dim \langle \nu (S)\rangle =4$, i.e. $e(S)= 1$. Since $A\cap B=\emptyset$, we have $o\notin B$. Thus $\nu (B\cup \{o\})$ is linearly dependent.
Since $B\cup \{o\}\subsetneq S$, $e(S)=1$ and $S$ is assumed to be equally dependent, we get a contradiction. In the same way we prove that $\#B'=4$.

\quad (b) By step (a) we have $\# A'=\#B' =4$. Write $o =(o_1,\dots ,o_k)$. The sets $\nu(A')$ and $\nu(B')$ are linearly dependent. Assume for the moment the existence of $A''\subsetneq A'$ such
that $e(A'') =e(A')$. We have $\#A''=3$, $e(A'')=1$ and there is $i\in \{1,\dots ,k\}$ such that $\#\pi _h(A'') =1$ for all $h\ne 1$. Since $e(A)=0$, $o\in A''$. Set $\{b\} := A\setminus A\cap A'$. We see that $A$ is obtained from $\{o,b\}$ making an elementary increasing with respect to $o$ and the $i$-th factor. But then $\nu (o)$ is spanned by $\nu(A\cap A'')$, contradicting the generality of $q\in \langle \nu (A)\rangle \cap \langle \nu (B)\rangle$ and that $S$ is equally dependent.
In the same way we handle the case in which there is $B''\subsetneq A'$ such that $\nu (A'')$ is dependent.

\quad ({c}) By steps (a) and (b) we may assume that $\nu(A')$ and $\nu(B'))$ are circuits. Let $Y' =\prod _{i=1}^{s}
\PP^m_i$
(resp. $Y'' = \prod _{i=1}^{c} \PP^{t_i}$) be the minimal multiprojective subspace of $Y$ containing $A'$ (resp. $B'$). 
 By \cite[Proposition 5.2]{b} either $s=1$ and $m_1=2$ or $s=2$ and $m_1=m_1=2$, either $c=2$ and $t_1=2$ or $c=2$ and
$t_1=t_2=1$.

\quad (c1) Assume $s=c=2$. Up to a permutation of the factors we may assume $\#\pi _h(A')=1$ for all $h>1$. Call $1\le i<j\le k$ the two indices such that
$\#\pi _h(B') =1$ for all $h\notin \{i,j\}$. Note that $\pi _h(S) =\pi _h(o)$ if $h\notin \{1,2,i,j\}$.

\quad {\bf Claim 1:} $k=j$.

\quad {\bf Proof Claim 1:} Assume $k>j$. Since $k>j\ge 2$, we have $\pi _k(A) =\pi _k(o) =\pi _k(B)$. Thus $Y$ is not concise for $Y$.

\quad {\bf Claim 2:} $k\le 4$ and $Y=(\PP^1)^4$ if $k=4$.

\quad {\bf Proof Claim 2:} By Claim 1 we have $k\le 4$. Assume $k=4$, i.e. assume $i=3$ and $j=4$. Assume $Y\ne (\PP^1)^4$, i.e. assume $n_h\ge 2$ for some $h$, say for $h=1$. Fix $a\in A$. Since $h^0(\Oo _Y(\epsilon _1)) =n_1+1\ge 3$, there is $H\in |\Oo _Y(\epsilon _1)|$ containing $o$ and at least one point of $B$. By concision we have $S\nsubseteq H$.
Since $A$ and $B$ irredundantly spans $q$, \cite[5.1]{bb2} or \cite[2.4, 2.5]{bbcg1} gives $h^1(\Ii _{S\setminus S\cap H}(0,1,1,1)) >0$. Since  $\#\pi _1(B') =1$, we have $B\subset H$.
Thus $\#(S\setminus S\cap H)\le 2$. Since $\Oo _Y(\epsilon _1)$ is globally generated, we get $\#(S\setminus S\cap H) =2$ (i.e. $S\setminus S\cap H =A\setminus \{a\}$).
Since $\Oo _{Y_1}(1,1,1)$ is very ample, we get $\#\eta _1(A\setminus \{a\}) =1$. Taking another $a'\in A$ instead of $a$, we get $\#\eta _1(A)=1$, i.e. $A$ does not depend on the second factor of $Y$. Since $\nu(A)$ irredundantly spans $\nu(o)$, we get $\#\pi _1(A')=1$, a contradiction.

 \quad (c2) Assume $s=2$ and $c=1$ (the case $s=2$ and $c=1$) being similar. We may assume $\pi _h(A')=1$ for all $h>2$. Call $i$ the only index such that $\#\pi _i(B')>1$.
 As in step (c1) we get $k \le \#\{1,2,3\}\le 3$.
 
 \quad (c3) Assume $s=c=1$. As is step (c1) and (c2) we get $k\le 2$.

\section{$r_Y(q)=2$}\label{Sd}

In this section we assume $r_Y(q)=2$. We fix $E\in \Ss (Y,q)$. Set $M:= \langle \nu (A)\rangle \cap \langle \nu (E)\rangle$. Call $Y'$ (resp. $Y''$) the minimal multiprojective subspace containing $E\cup A$ (resp. $E\cup B)$)

\begin{lemma}\label{g1}
Take a circuit $F\subset Y:= \PP^1\times \PP^1\times \PP^1$ concise for $Y$ and with $\#S=5$. Write $F =E\cup A$ with $\#E=2$
and $\#A=3$. Then $Y$ is concise for $E$.
\end{lemma}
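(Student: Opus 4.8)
The plan is to read off the conclusion "$Y$ concise for $E$" factor by factor. Write $E=\{e_1,e_2\}$. Since $\#E=2$ and $Y=(\PP^1)^3$, the minimal multiprojective space containing $E$ is $\prod_{i=1}^3\langle\pi_i(E)\rangle$, so $E$ is concise for $Y$ if and only if $\pi_i(e_1)\ne\pi_i(e_2)$ for every $i\in\{1,2,3\}$. Thus it suffices to exclude, for each $i$, that the two points of $E$ share their $i$-th coordinate; by symmetry I argue for $i=1$ and suppose, for a contradiction, that $\pi_1(e_1)=\pi_1(e_2)$. First I would dispose of the most degenerate case: if $e_1,e_2$ agreed in two coordinates, then $\langle\nu(E)\rangle$ would be a line of the Segre variety $\nu(Y)$ (the image of a $\PP^1$-factor), so the point $q\in\langle\nu(E)\rangle\cap\langle\nu(A)\rangle$ determined by the circuit relation would have rank $1$; this is excluded by the standing hypothesis $r_Y(q)=2$ of the section. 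Hence I may assume $e_1,e_2$ differ in exactly two factors, so that $\pi_2(e_1)\ne\pi_2(e_2)$ and $\pi_3(e_1)\ne\pi_3(e_2)$.

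The main tool is the residual sequence along the fibre $H:=\pi_1^{-1}(\pi_1(e_1))\in|\Oo_Y(\epsilon_1)|$, which contains $E$:
\[
0\to\Ii_{\Res_H(F)}(\hat{\epsilon}_1)\to\Ii_F(1,1,1)\to\Ii_{F\cap H,H}(1,1,1)\to0 .
\]
Conciseness of $F$ gives $\#\pi_1(F)\ge 2$, hence $\Res_H(F)\ne\emptyset$ and $F\cap H\subsetneq F$; since $F$ is a circuit, $F\cap H$ is linearly independent, and as $H\cong\PP^1\times\PP^1$ carries at most $4$ such points, $h^1(H,\Ii_{F\cap H,H}(1,1))=0$. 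Because $h^1(\Ii_F(1,1,1))=e(F)=1$, the sequence forces $h^1(\Ii_{\Res_H(F)}(\hat{\epsilon}_1))\ge1$, with $\Res_H(F)\subseteq A$ and $\#\Res_H(F)\le3$. Passing through $\eta_1\colon Y\to Y_1=\PP^1\times\PP^1$, positivity of this $h^1$ pins down $\Res_H(F)$: either $\eta_1$ fails to be injective on it, or its image lies on a ruling of $Y_1$.

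In the favourable sub-case, where no point of $A$ lies in $H$ (so $\Res_H(F)=A$), I would then contract the unique circuit relation $\sum_{p\in F}c_p\,\nu(p)=0$ against a covector $\phi$ on the first factor with $\phi(\pi_1(e_1))=0$. This annihilates both $E$-terms and leaves $\sum_{a\in A}c_a\phi(\pi_1(a))\,\nu(\eta_1(a))=0$ with all coefficients nonzero, so $\eta_1(A)$ is linearly dependent, i.e. lies on a ruling of $Y_1$; a second contraction against a covector vanishing on the factor shared by $\eta_1(A)$, applied to the full relation, then returns $c_{e_1}=c_{e_2}=0$ (using $\pi_2(e_1)\ne\pi_2(e_2)$ or $\pi_3(e_1)\ne\pi_3(e_2)$), contradicting that all coefficients of a circuit relation are nonzero.

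The hard part is precisely the residual configuration in which some point of $A$ also falls in the fibre $H$: a set $E$ lying in a single $\pi_1$-fibre can still span a rank-$2$ point using only the other two factors, and this case is not killed by the circuit and conciseness hypotheses alone. Here the contraction above loses a coefficient and the two-ruling argument stalls, so I expect to need the section's assumption $r_Y(q)=2$ together with the classification of rank-$2$ triples in Proposition \ref{cp1} (equivalently \cite[Theorem 1.1]{b}) to eliminate the borderline possibilities; the bookkeeping of which points of $A$ meet $H$, and of the two contractions, is the only real obstacle. If this case cannot be closed under the present hypotheses, the statement should be supplemented by the assumption that $q$ is concise for $Y$, which by \cite[Proposition 3.2.2.2]{l} makes the conclusion immediate.
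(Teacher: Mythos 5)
Your proof has a genuine gap, and you flag it yourself: the case $A\cap H\ne\emptyset$ is left open. The problem is worse than you suspect: that case cannot be closed, because it contains actual concise circuits, so the lemma as stated is false and your closing suspicion (that a concision hypothesis must be added) is the correct diagnosis. Here is an explicit example. Fix four distinct points $p_1,\dots ,p_4\in \PP^1$ with lifts $v_1,\dots ,v_4\in K^2$ (possible as soon as $K$ has at least $3$ elements); the points $(p_i,p_i)$ lie on the diagonal conic of $\PP^1\times \PP^1$, so there is a relation $\alpha _1v_1\otimes v_1+\cdots +\alpha _4v_4\otimes v_4=0$ with all $\alpha _i\ne 0$. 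Fix three distinct points $\lambda ,\mu _2,\mu _3\in \PP^1$ with lifts $\ell ,m_2,m_3$ and write $\ell =c_2m_2+c_3m_3$, $c_2c_3\ne 0$. Set
\[
e_1=(\lambda ,p_1,p_1),\quad e_2=(\lambda ,p_2,p_2),\quad a_1=(\lambda ,p_3,p_3),\quad a_2=(\mu _2,p_4,p_4),\quad a_3=(\mu _3,p_4,p_4),
\]
and $F:=\{e_1,e_2,a_1,a_2,a_3\}$, $E:=\{e_1,e_2\}$, $A:=\{a_1,a_2,a_3\}$. Writing $\hat{x}$ for the evident lift of $\nu (x)$, we get
\[
\alpha _1\hat{e}_1+\alpha _2\hat{e}_2+\alpha _3\hat{a}_1+\alpha _4c_2\hat{a}_2+\alpha _4c_3\hat{a}_3=\ell \otimes \Bigl(\sum _{i=1}^{4}\alpha _iv_i\otimes v_i\Bigr)=0,
\]
a relation with full support, while the five tensors span a $4$-dimensional space: $\hat{e}_1,\hat{e}_2,\hat{a}_1$ span $\ell \otimes U$ with $U:=\langle v_1\otimes v_1,v_2\otimes v_2,v_3\otimes v_3\rangle$, and $\hat{a}_2,\hat{a}_3$ span $\langle m_2,m_3\rangle \otimes v_4\otimes v_4$, which meets $\ell \otimes U$ exactly in the line through $\ell \otimes v_4\otimes v_4$ (note $v_4\otimes v_4\in U$). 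Hence $e(F)=1$ and, since the unique relation has no vanishing coefficient, every proper subset of $\nu (F)$ is independent: $F$ is a circuit. It is concise for $Y=(\PP^1)^3$, because $\pi _1(F)=\{\lambda ,\mu _2,\mu _3\}$ and $\pi _2(F)=\pi _3(F)=\{p_1,\dots ,p_4\}$. Yet $\pi _1(E)=\{\lambda \}$, so $Y$ is not concise for $E$. This sits exactly in your hard case: $H=\pi _1^{-1}(\lambda )$ contains $e_1,e_2,a_1$ and the residual set $\{a_2,a_3\}$ satisfies $\eta _1(a_2)=\eta _1(a_3)$.

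Two consequences. First, your hope that the running hypothesis $r_Y(q)=2$ together with Proposition \ref{cp1} would eliminate this configuration does not work: in the example the point $q$ spanned by $\nu (E)$, namely $[\ell \otimes (\alpha _1v_1\otimes v_1+\alpha _2v_2\otimes v_2)]$, has rank exactly $2$, $E\in \Ss (Y,q)$, and $\nu (A)$ irredundantly spans $q$; what fails is only the conciseness of $q$ for $Y$ (here $q$ is concise for $\PP^1\times \PP^1$). So the only correct repair is the one in your last sentence, after which the conclusion is immediate from \cite[Proposition 3.2.2.2]{l} --- note also that, as stated, the lemma contains no point $q$ and no rank hypothesis, so importing them from Section \ref{Sd} was already outside its hypotheses (your disposal of the ``two shared coordinates'' case has the same defect). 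Second, the comparison with the paper: its proof is a two-line citation of \cite[Lemma 5.8]{b}, asserting that a concise $5$-point circuit in $(\PP^1)^3$ lies on an integral curve of tridegree $(1,1,1)$; such a curve meets each fiber of $\pi _1$ in a single point, whereas $F$ above meets $\pi _1^{-1}(\lambda )$ in three points, so the classification as quoted cannot cover all concise $5$-point circuits. In short: your favourable sub-case ($A\cap H=\emptyset$, handled by the two contractions) is correct, but the obstacle you ran into is a defect of the lemma itself, not of your method, and it equally undermines the paper's own proof.
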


\begin{proof}
By \cite[Lemma 5.8]{b} $F$ is contained in an integral curve $C\subset Y$ of tridegree $(1,1,1)$. Each map $\pi
_{i|C}: C\to \PP^1$ is an isomorphism. Thus each $\pi _{i|S}$ is injective.\end{proof}

\begin{remark}\label{g2}
Since $q$ has tensor rank $3$, each set $\nu (A)$ and $\nu (B)$ irredundantly spans $q$.
\end{remark}

\begin{lemma}\label{d2}
If  $E\cap A\ne \emptyset$ (resp. $E\cap B\ne \emptyset$) only if either $w(S)\le 3$ or $A$ (resp. $B$) is obtained form $E$ making an
elementary increasing.  
\end{lemma}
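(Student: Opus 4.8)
The plan is to handle the case $E\cap A\ne\emptyset$ (the statement for $B$ is symmetric) by reducing everything to Proposition \ref{cp1}, applied with the roles reversed: the two‑point set $E$ will play the part of the rank‑$2$ decomposition (the ``$A$'' of Proposition \ref{cp1}), and our three‑point set $A$ will play the part of the three‑point decomposition (the ``$B$'' of that proposition). First I would pin down the intersection. Since $r_Y(q)=2$, Remark \ref{g2} gives that $\nu(A)$ irredundantly spans $q$, so no two of its points already span $q$; hence $E\not\subseteq A$ and $\#(E\cap A)=1$. Write $A=\{o,a_1,a_2\}$ and $E=\{o,e\}$ with $\{o\}=E\cap A$. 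As $\nu(o)$ and $\nu(e)$ have rank $1$ while $q$ has rank $2$, the point $q$ is a genuine combination of $\nu(o)$ and $\nu(e)$, so $\langle\nu(E)\rangle=\langle\nu(o),q\rangle$. Because $\nu(o)\in\langle\nu(A)\rangle$ and $q\in\langle\nu(A)\rangle$, the whole line $\langle\nu(o),q\rangle$ lies in the plane $\langle\nu(A)\rangle$; in particular $\nu(e)\in\langle\nu(A)\rangle$. Thus $\langle\nu(E\cup A)\rangle=\langle\nu(A)\rangle$ is a plane and $e(E\cup A)=1$.

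Next I would let $Y'$ be the minimal multiprojective space of $E\cup A$ and show that $A$ is concise for $Y'$, which is exactly the hypothesis needed to feed the data into Proposition \ref{cp1} (note $Y_0\subseteq Y'$ automatically, where $Y_0=(\PP^1)^{k_0}$ is the minimal space of $E$, i.e. the concise space of $q$, since $E\subseteq E\cup A$). The key cohomological step is this: suppose some factor $i$ is non‑constant on $E$ but constant on $A$. Choose a general hyperplane $M\subset\PP^{n_i}$ through the common value $\pi_i(A)$ with $\pi_i(e)\notin M$, and set $H:=\pi_i^{-1}(M)$; then $(E\cup A)\cap H=A$ and $\Res_H(E\cup A)=\{e\}$, so the residual sequence
\begin{equation*}
0\to\Ii_{\{e\}}(\hat{\epsilon}_i)\to\Ii_{E\cup A}(1,\dots,1)\to\Ii_{A,H}(1,\dots,1)\to 0,
\end{equation*}
together with $h^1(\Ii_{\{e\}}(\hat{\epsilon}_i))=0$ and $h^1(H,\Ii_{A,H}(1,\dots,1))=e_H(A)=0$ (the latter because $\nu(A)$ is linearly independent, and independence is preserved under restriction to $H$), forces $e(E\cup A)=0$, contradicting the previous paragraph. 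Hence no such factor exists, so every factor of $Y'$ is non‑constant on $A$, i.e. $A$ is concise for $Y'$.

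With conciseness established, Proposition \ref{cp1} applies to $(Y_0,Y',q,E,A)$, and since $E\cap A\ne\emptyset$ we land in its case $(1)$, which states precisely that $A$ is obtained from $E$ by an elementary increasing (and that $Y'$ is $Y_0$, $\PP^2\times(\PP^1)^{k_0-1}$, or $(\PP^1)^{k_0+1}$). This already yields the second alternative of the lemma. I expect the conciseness verification of the second paragraph to be the only genuinely delicate point; once it is in place the conclusion is immediate from Proposition \ref{cp1}. The first alternative $w(S)\le 3$ of the statement then functions as a safety net for any exceptional configuration in which this conciseness argument cannot be run as stated — for instance, if one prefers to isolate the degenerate case $\nu(e)\in\langle\nu(a_1),\nu(a_2)\rangle$, where the circuit $\{e,a_1,a_2\}$ varies in a single factor by Remark \ref{z3}, one checks directly that propagating this one‑factor degeneracy through $o$ and, via the constraint $q\in\langle\nu(B)\rangle$ together with the equal dependence of $S$, through $B$, bounds $w(S)$ by $3$. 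The bookkeeping in that last degeneracy is the part I would treat most carefully, but in the main line of argument the residual‑sequence conciseness step is what carries the proof.
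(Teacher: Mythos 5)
Your reduction has two sound components: the observation that $\nu(e)\in\langle\nu(A)\rangle$, hence $e(E\cup A)=1$, and the residual--sequence argument showing that $A$ is concise for the minimal multiprojective space $Y'$ of $E\cup A$; both of these are correct. The gap is the final step, where you feed $(Y_0,Y',q,E,A)$ into Proposition \ref{cp1} and conclude that $E\cap A\ne\emptyset$ always forces $A$ to be an elementary increasing of $E$. That conclusion is too strong, and it is false: take $Y_0=Y'=\PP^1\times\PP^1$, a smooth $C\in|\Oo_{\PP^1\times\PP^1}(1,1)|$, four distinct points $o,e,a_1,a_2\in C$, $E=\{o,e\}$, $A=\{o,a_1,a_2\}$, and $q$ general on the line $\langle\nu(o),\nu(e)\rangle$. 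Then $q$ has rank $2$ and is concise for $Y_0$, $E\in\Ss(Y_0,q)$, $A\in\Ss(Y',q,3)$ with $A$ concise for $Y'$, and $E\cap A=\{o\}\ne\emptyset$ --- every hypothesis you verify --- yet $\nu(e)\notin\langle\nu(a_1),\nu(a_2)\rangle$ (three points of a smooth conic are never collinear), so $A$ is not an elementary increasing of $E$. This is exactly the configuration ``$E\cup A$ equally dependent'' (a $4$-point circuit), which the paper's proof isolates as its step (b): there the elementary-increasing disjunct fails, and the disjunct $w(S)\le 3$ is obtained only by bringing in the other set $B$ and analysing $E\cup B$ (steps (b1)--(b3), via \cite[Proposition 5.2]{b} and the disjoint case (2) of Proposition \ref{cp1}). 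Your main line never touches $B$, so it has no mechanism to produce the bound $w(S)\le 3$ in this configuration; in effect you are leaning on precisely the defective part of Proposition \ref{cp1}, whose proof (Lemma \ref{cp4}) dismisses the equally dependent case with the claim $\nu(p)\notin\langle\nu(S\setminus\{p\})\rangle$ --- a claim incompatible with the definition of a circuit, since for any circuit each point lies in the span of the complement.

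A second, smaller point: your ``safety net'' identifies the wrong exceptional case. The degeneracy $\nu(e)\in\langle\nu(a_1),\nu(a_2)\rangle$ is not the case needing $w(S)\le 3$; by Remark \ref{z3} it is precisely the case in which $A$ \emph{is} an elementary increasing of $E$ (the paper's step (a), where the circuit $\{e,a_1,a_2\}$ varies in a single factor). The genuinely exceptional case is the opposite one, in which no three of the four points of $E\cup A$ are collinear under $\nu$. So the correct architecture is the paper's dichotomy --- $E\cup A$ not equally dependent (direct argument, elementary increasing) versus $E\cup A$ equally dependent ($4$-point circuit, then use $B$ to bound $w(S)$) --- and the second branch cannot be outsourced to case (1) of Proposition \ref{cp1}.
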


\begin{proof}
It is sufficient to prove the lemma for the set $A$. The `` if '' part follows from the definition of elementary
transformation, because $\#E >1$.

Assume $E\cap A \ne \emptyset$. Since $\nu (A)$ irredundantly spans $q$ (Remark \ref{g2}), we have
$E\nsubseteq A$. Write $E\cap A=\{a\}$, $E =\{a,b\}$ and $A = \{a,u,v\}$. We need to prove that there is $i$ such that
$\pi _h(a)=\pi _h(u)=\pi _h(v)$ for all $h\ne i$, while $\pi _i(\{a,u,v\})$ spans a line. 

\quad (a) First assume that $E\cup A$ is not equally dependent. Since $\#(E\cup A)=4$, we have $e(E\cup A)=1$ and there is $F\subset E\cup A$ such that $\#F =3$ and $e(F)=1$.
By Remark \ref{z3} there is $i$ such that $\#\pi _h(F) =1$ for all $h\ne i$ and $\pi _i(F)$ is formed by $3$ collinear points. Since $\nu (E)$ and $\nu (A)$ irredundantly spans
$q$ (Remark \ref{g2} and the assumption $E\in \Ss (Y,q)$), it is easy to check that $(E\cup A)\setminus F =\{a\}$. Thus $A$ is obtained from $E$ applying an elementary increasing with respect to $b$ and the $i$-th factor of the multiprojective space.

\quad (b) Now assume that $E\cup A$ is equally dependent. Since $\#(E\cup A)=4$, \cite[Proposition 5.2]{b} says that $w(E\cup A)\le 2$
and that $\PP^1\times \PP^1$ is the minimal multiprojective space containing $E\cup A$. Since $E\in \Ss (Y,q)$ and $rY(q)>0$, $Y'\cong \PP^1\times \PP^1$ is the minimal multiprojective space containing $E$.

\quad (b1) Assume $E\cap B \ne \emptyset$ and $E\cup B$ is not equally dependent. By step (a) applied to $B$ we get that $B$ is obtained from $E$ making a positive elementary transformation. Thus either $w(B) =2$ or $\PP^1\times \PP^1\times \PP^1$ is the minimal multiprojective space containing $B$ (last sentence of Example \ref{k2}) and it contains $A$, too, since it contains $E$. Thus $w(S)\le 3$.

\quad (b2) Assume $E\cap B\ne \emptyset$ and $E\cup B$ is equally dependent. Thus $Y''\cong \PP^1\times \PP^1$ and $Y''$ is the minimal multiprojective subspace containing $E$. Hence $Y''=Y'$ and $Y=\PP^1\times \PP^1$.

\quad (b3) Assume $E\cap B =\emptyset$. We get $w(Y'') \le 3$ by Proposition \ref{cp1} and (since $W\supseteq Y'$) we get
$Y=W$.\end{proof}

\begin{lemma}\label{d2.0}
Assume $E\cap A\ne \emptyset$ and $E\cap B\ne \emptyset$. Then either $w(S)\le 2$ or $S$ is as in one of the Examples \ref{k2} or \ref{k3}.
\end{lemma}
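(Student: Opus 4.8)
The plan is to reduce everything to Lemma \ref{d2} applied simultaneously to $A$ and to $B$. First, since $\#E=2$, $A\cap B=\emptyset$, and $E$ meets both $A$ and $B$, the two intersections $E\cap A\subseteq A$ and $E\cap B\subseteq B$ are disjoint nonempty subsets of the two-point set $E$; hence each is a single point and $E=\{a,b\}$ with $a\in A\setminus B$ and $b\in B\setminus A$. In particular $a,b\in S$, so $E\subset S$. I would then run, for $A$ and for $B$ separately, the dichotomy in the proof of Lemma \ref{d2}: either $E\cup A$ (resp. $E\cup B$) is equally dependent, or it is not and $A$ (resp. $B$) is obtained from $E$ by an elementary increasing.

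The main case is when neither $E\cup A$ nor $E\cup B$ is equally dependent, so that case (a) of Lemma \ref{d2} makes both $A$ and $B$ elementary increasings of $E$. Writing $A=\{a,u_A,v_A\}$ and $B=\{b,u_B,v_B\}$, this means $\{b,u_A,v_A\}$ is a circuit whose points agree outside some factor $i$ and are collinear in the $i$-th factor, and symmetrically $\{a,u_B,v_B\}$ is a circuit spread only in some factor $j$. I would compute $w(S)$ factor by factor: the only factors on which $S$ is nonconstant are $i$, $j$, and those $h$ with $a_h\neq b_h$. If $a$ and $b$ agree off $\{i,j\}$ this gives $w(S)\le 2$; otherwise I re-partition $S$ as $\{b,u_A,v_A\}\cup\{a,u_B,v_B\}$ and read off that the two collinear triples, together with the condition $a_h\neq b_h$ on the remaining factors, exhibit $(Y,S)$ as Example \ref{k2} when $i\neq j$ and as Example \ref{k3} when $i=j$ (this is exactly the shape produced in Section \ref{Sk}; note $a_i$ is distinct from $b_i,(u_A)_i,(v_A)_i$, since $e(A)=0$, giving the required four distinct values). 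The relation $e(S)-1=\dim\bigl(\langle\nu(A)\rangle\cap\langle\nu(B)\rangle\bigr)$, together with $\nu(a),\nu(b)\in\langle\nu(A)\rangle\cap\langle\nu(B)\rangle$, shows $e(S)\ge 2$, consistent with these examples.

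It remains to treat the case where one of $E\cup A$, $E\cup B$ is equally dependent, say $E\cup A$. By case (b) of Lemma \ref{d2} the minimal space of $E\cup A$, hence of $E$ and of $A$, is a $\PP^1\times\PP^1$, so $w(E)=2$ and $q$ is concise for this $\PP^1\times\PP^1$. If $E\cup B$ is also equally dependent, the same argument gives the same minimal space for $E$, forcing $Y=\PP^1\times\PP^1$ and $w(S)\le 2$. If $E\cup B$ is not equally dependent, then $B$ is an elementary increasing of $E$, so by Proposition \ref{cp1} the minimal space of $B$ is one of $\PP^1\times\PP^1$, $\PP^2\times\PP^1$, $\PP^1\times\PP^1\times\PP^1$; in the first two cases $S$ still lies in a space with two factors and $w(S)\le 2$. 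The last case is the crux: there the two new points $u_B,v_B$ of $B$ together with $a\in A\cap E$ form a circuit $\{a,u_B,v_B\}\subset S$, so $S$ contains a dependent triple, and re-partitioning $S$ as $\{a,u_B,v_B\}\cup\{u_A,v_A,b\}$ places us in the situation of Section \ref{So}, whose analysis (Remark \ref{o2}, which rules out $Y=\PP^1\times\PP^1\times\PP^1$ in this configuration) excludes this possibility. I expect this last reduction — excluding the extra factor introduced by the elementary increasing of $B$ by feeding the resulting circuit back into Section \ref{So} — to be the main obstacle.
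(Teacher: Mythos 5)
Your first two reductions track the paper exactly: the paper's own proof also splits off the case where both $A$ and $B$ are elementary increasings of $E$ and reads off Example \ref{k2} (different factors) versus Example \ref{k3} (same factor), and your cases ``both $E\cup A$ and $E\cup B$ equally dependent'' and ``the increasing giving $B$ stays in a two-factor space'' are handled correctly. You have also put your finger on the real difficulty, which the paper's three-line proof silently skips: the paper infers from $w(S)>2$ and Lemma \ref{d2} that both $A$ and $B$ are elementary increasings of $E$, but the alternative in Lemma \ref{d2} is $w(S)\le 3$, not $w(S)\le 2$, so the case ``$E\cup A$ equally dependent and $w(S)=3$'' is addressed neither there nor in your first two reductions.

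Unfortunately, your resolution of that crux case is where the proof fails, and the gap is not repairable, because the configuration you try to exclude actually occurs. Let $0,1,2$ denote three distinct points of $\PP^1$, let $Y=(\PP^1)^3$, and set $a=(0,0,0)$, $u_A=(0,1,0)$, $v_A=(2,2,0)$, $b=(1,2,0)$, $u=(0,0,1)$, $v=(0,0,2)$, $A=\{a,u_A,v_A\}$, $B=\{b,u,v\}$, $E=\{a,b\}$, $S=A\cup B$. Then $E\cup A$ is a $4$-point circuit (the pairs $a,u_A$ and $b,v_A$ lie on two concurrent Segre lines of the slice $\PP^1\times\PP^1\times\{0\}$, none of the four points being the intersection point), while $\{a,u,v\}$ is a collinear triple, so $B$ is an elementary increasing of $E$ with respect to $a$ and the third factor. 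One checks directly that $e(A)=e(B)=0$, that $S$ is concise for $Y$ and equally dependent with $e(S)=2$, and that $\langle \nu (A)\rangle \cap \langle \nu (B)\rangle =\langle \nu (E)\rangle$ (use $\nu ({b})\in \langle \nu (A)\rangle$ from the circuit and $\nu ({a})\in \langle \nu (B)\rangle$ from the collinear triple); a general $q$ on this line is an invertible $2\times 2$ matrix tensored with a fixed vector, so $r_Y(q)=2$ and $E\in \Ss (Y,q)$, with $E\cap A=\{a\}$ and $E\cap B=\{b\}$. Thus all hypotheses of the lemma hold, yet $w(S)=3$ and $S$ is in neither Example \ref{k2} nor Example \ref{k3}: in both examples every point of $S$ lies in a triple of $S$ that is constant off a single factor, whereas here the only point of $S$ agreeing with $b$ outside one coordinate is $v_A$. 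So this case cannot be ``excluded''; the statement itself fails there, and only the weaker bound $w(S)\le 3$ (which is all that Lemma \ref{g5} and Theorem \ref{is1} actually require) survives. The precise step of yours that breaks is the appeal to Remark \ref{o2}: part (b3) of that remark rests on the claim, attributed to \cite[Lemma 5.8]{b}, that every $5$-point circuit concise for $(\PP^1)^3$ lies on an integral curve of tridegree $(1,1,1)$; but the circuit $S\setminus \{a\}=\{u,v,b,u_A,v_A\}$ produced by your re-partition is concise for $(\PP^1)^3$ and contains the two points $u,v$ of a Segre line, hence lies on no such curve (each $\pi_i$ is injective on a $(1,1,1)$-curve). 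So the very kind of circuit your argument feeds into Remark \ref{o2} is exactly the kind for which that remark's exclusion is invalid.
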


\begin{proof}
Assume $w(S)>2$. By Lemma \ref{d2} $A$ and $B$ are obtained from $E$ making an elementary increasing. Since $A\cap B =\emptyset$, we have $\# A\cap E =\# B\cap E=1$ and $E\subset S$. By the definition of elementary
transformation it is obvious that $S$ is as in one of the Examples \ref{k2}  or \ref{k3} (Example \ref{k3}  occurs
if and only if we are doing the elementary increasings giving $A$ and $B$ from $E$ with respect to the same factor of the multiprojective space).
\end{proof}

\begin{lemma}\label{d1.1}
Assume $E\cap A= \emptyset$ (resp. $E\cap B=\emptyset$). Then $E\cup A$ (resp. $E\cup B$) is equally dependent.
\end{lemma}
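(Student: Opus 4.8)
The plan is to transcribe, almost verbatim, the argument of Lemma \ref{cp0}: everything reduces to a Grassmann-formula bookkeeping of the linear spans of $\nu(E\cup A)$ and its one-point-smaller subsets, fed by the fact that both $\nu(E)$ and $\nu(A)$ irredundantly span $q$. The workhorse is the identity, valid for disjoint finite sets $P,Q$ and obtained directly from the Grassmann formula (with the convention $\dim\emptyset =-1$),
\[
e(P\cup Q)=e(P)+e(Q)+1+\dim\bigl(\langle\nu(P)\rangle\cap\langle\nu(Q)\rangle\bigr).
\]
In the present section $e(A)=e(B)=0$, and $\#E=r_Y(q)=2$ forces $e(E)=0$ (two distinct embedded points span a line). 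Applying the identity with $P=A$, $Q=E$ (recall $E\cap A=\emptyset$) gives $e(E\cup A)=1+\dim(\langle\nu(A)\rangle\cap\langle\nu(E)\rangle)$. Since $q$ lies in $\langle\nu(A)\rangle$ (it is general in $\langle\nu(A)\rangle\cap\langle\nu(B)\rangle$) and in $\langle\nu(E)\rangle$ (as $E$ spans $q$), this intersection is nonempty, so $e(E\cup A)\ge 1>0$.

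Next I would show that deleting any single point strictly lowers $e$; combined with the monotonicity $e(S'')\le e(S')$ for $S''\subseteq S'$ — which follows because $h^1(\Ii_{S''}(1,\dots,1))$ is a quotient of $h^1(\Ii_{S'}(1,\dots,1))$, or equivalently from the span count — this yields $e(S')<e(E\cup A)$ for every $S'\subsetneq E\cup A$, i.e. equal dependence. For a point $a\in A$: because $\nu(A)$ irredundantly spans $q$ (Remark \ref{g2}), we have $q\notin\langle\nu(A\setminus\{a\})\rangle$, so $\langle\nu(A\setminus\{a\})\rangle\cap\langle\nu(E)\rangle\subsetneq\langle\nu(A)\rangle\cap\langle\nu(E)\rangle$ (both linear, the smaller one missing the witness $q$), whence the same identity gives $e(E\cup(A\setminus\{a\}))=1+\dim(\langle\nu(A\setminus\{a\})\rangle\cap\langle\nu(E)\rangle)<e(E\cup A)$. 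For a point $b\in E$ with remaining point $b'$: since $r_Y(q)=2$ we have $q\ne\nu(b')$, so $q\notin\langle\nu(\{b'\})\rangle$ and hence $\langle\nu(A)\rangle\cap\langle\nu(\{b'\})\rangle\subsetneq\langle\nu(A)\rangle\cap\langle\nu(E)\rangle$, producing the strict drop in exactly the same way.

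The ``resp.'' statement is identical with $B$ replacing $A$, using that $\nu(B)$ too irredundantly spans $q$ by the generality of $q$ in $\langle\nu(A)\rangle\cap\langle\nu(B)\rangle$. The only point requiring any care is the \emph{strictness} of the two subspace inclusions: these hold precisely because $q$ certifies membership in the larger intersection while failing to lie in $\langle\nu(A\setminus\{a\})\rangle$ (resp. $\langle\nu(\{b'\})\rangle$), and that failure is exactly what the irredundancy of the spanning sets together with the hypothesis $r_Y(q)=2$ (so $q$ is not a single Segre point) guarantee. Beyond this bookkeeping I expect no genuine obstacle, the whole proof being a direct specialization of Lemma \ref{cp0} to the pair $(A,E)$.
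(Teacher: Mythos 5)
Your proof is correct and follows essentially the same route as the paper's: the paper likewise applies Grassmann's formula to the disjoint union $E\cup A$ (its statement ``the assumption is equivalent to $\dim M=e(E\cup A)-1$'' is exactly your identity), and it obtains the strict drop of $e$ under one-point deletions from the fact that $q$ lies in $\langle \nu (A)\rangle \cap \langle \nu (E)\rangle$ but not in $\langle \nu (A\setminus \{a\})\rangle$ nor in $\langle \nu (E\setminus \{b\})\rangle$, using Remark \ref{g2} and $E\in \Ss (Y,q)$. Your only additions, the explicit check that $e(E\cup A)>0$ and the explicit appeal to monotonicity of $e$ on subsets, are harmless elaborations of points the paper leaves implicit.
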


\begin{proof}
It is sufficient to prove the lemma for $E\cup A$. The assumption is equivalent to $\dim M = e(E\cup A)-1$. Fix $a\in A$. Since $q\notin \langle \nu (A\setminus \{a\})\rangle$, we have $\langle \nu (A\setminus \{a\})\rangle \cap \langle \nu (E)\rangle\subsetneq M$. The Grassmann's formula gives $e((E\cup A)\setminus \{a\}) < e(E\cup A)$. Take $b\in E$. Since $q\notin \langle \nu (E\setminus \{b\})\rangle$, we have $\langle \nu (E\setminus \{b\})\rangle \cap \langle \nu (A)\rangle\subsetneq M$. Thus $E\cup A$ is equally dependent.
\end{proof}

\begin{lemma}\label{g4}
Assume $E\cap A =E\cap B=\emptyset$. Then $w(S) \le 3$ and $Y\cong \PP^1\times \PP^1\times \PP^1$ if $w(S)=3$.
\end{lemma}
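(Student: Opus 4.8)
The plan is to recognise $E\cup A$ and $E\cup B$ as two $5$-point equally dependent sets sharing the rank-$2$ decomposition $E$, to classify their minimal spaces by Proposition \ref{cp1}, and then to glue the two classifications along $E$. Since $E\cap A=E\cap B=\emptyset$ and $\#E=2$, $\#A=\#B=3$, both $E\cup A$ and $E\cup B$ have cardinality $5$ and, by Lemma \ref{d1.1}, are equally dependent. Write $Y_q\subseteq Y$ for the minimal multiprojective subspace with $q\in\langle\nu(Y_q)\rangle$; by concision (\cite[Proposition 3.2.2.2]{l}) $Y_q$ is also the minimal space of $E$, and since $r_Y(q)=2$ we have $Y_q\cong(\PP^1)^{k_0}$ with $k_0\ge 2$.

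First I would check that $Y'$ is concise for $A$ alone, not merely for $E\cup A$. Clearly $A\subseteq Y'$; conversely, if $W_A\subseteq Y'$ is the minimal space of $A$, then $q\in\langle\nu(A)\rangle\subseteq\langle\nu(W_A)\rangle$ forces $Y_q\subseteq W_A$ by concision, hence $E\subseteq W_A$ and $E\cup A\subseteq W_A$, so $Y'\subseteq W_A$ and $W_A=Y'$. The same argument shows $Y''$ is concise for $B$. By Remark \ref{g2} both $A$ and $B$ irredundantly span $q$, so $A\in\Ss(Y',q,3)$ and $B\in\Ss(Y'',q,3)$; since moreover $E\in\Ss(Y_q,q)$ and $E\cap A=E\cap B=\emptyset$, Proposition \ref{cp1}(2) applies to each pair and gives $Y',Y''\in\{\PP^2\times\PP^1,\ \PP^1\times\PP^1,\ \PP^1\times\PP^1\times\PP^1\}$; in particular $w(Y')\le 3$ and $w(Y'')\le 3$.

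To pass from $Y',Y''$ to $Y$ I would use the covering observation that every factor of $Y$ is a factor of $Y'$ or of $Y''$: for such an $i$ we have $\#\pi_i(S)>1$, and if $i$ were a factor of neither then $\#\pi_i(E\cup A)=\#\pi_i(E\cup B)=1$ would make $\pi_i(A)$, $\pi_i(E)$, $\pi_i(B)$ one common point, forcing $\#\pi_i(A\cup B)=1$, a contradiction. The main obstacle is that the resulting crude bound $w(S)\le w(Y')+w(Y'')-k_0\le 3+3-2=4$ (the $k_0$ factors of $Y_q$ lie in both $Y'$ and $Y''$) is one too large. The key sharpening is a rigidity coming from Lemma \ref{g1}: if $Y'\cong(\PP^1)^3$ then $w(Y')=3$, so $e(E\cup A)=1$ (the alternative $e(E\cup A)=2$ would give $\PP^2$ or $\PP^1\times\PP^1$ by Remark \ref{f2}, of width $\le 2$), i.e. $E\cup A$ is a $5$-point circuit concise for $(\PP^1)^3$; Lemma \ref{g1} then forces $(\PP^1)^3$ to be concise for $E$, whence $k_0=3$. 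Conversely $k_0=3$ gives $w(Y')\ge 3$ and hence $Y'\cong(\PP^1)^3$, and symmetrically for $Y''$, so $Y'\cong(\PP^1)^3\iff k_0=3\iff Y''\cong(\PP^1)^3$.

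Finally I would conclude by cases on $k_0\in\{2,3\}$. If $k_0=3$ then $Y'=Y''=Y_q\cong(\PP^1)^3$, so $S=A\cup B\subseteq Y_q$ and thus $w(S)\le 3$, with $Y\cong\PP^1\times\PP^1\times\PP^1$ whenever $w(S)=3$. If $k_0=2$ then neither $Y'$ nor $Y''$ is $(\PP^1)^3$, so $w(Y')=w(Y'')=2$; since both contain the two factors of $Y_q$, both factor sets equal that of $Y_q$, and the covering observation gives $w(S)\le 2$. In either case $w(S)\le 3$ with equality only for $Y\cong\PP^1\times\PP^1\times\PP^1$, as claimed.
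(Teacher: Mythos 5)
Your proof is correct and follows essentially the same route as the paper's: Lemma \ref{d1.1} for equal dependence of $E\cup A$ and $E\cup B$, Proposition \ref{cp1} to classify $Y'$ and $Y''$, Lemma \ref{g1} for the rigidity forcing $Y'=Y''=$ concise space of $E\cong(\PP^1)^3$ in the width-$3$ case, and then gluing the two classifications along the concise space of $E$ in the width-$2$ case. Your write-up only makes explicit some details the paper leaves implicit (conciseness of $Y'$ for $A$ alone, circuit-hood of $E\cup A$ via Remark \ref{f2} before invoking Lemma \ref{g1}, and the covering of the factors of $Y$ by those of $Y'$ and $Y''$).
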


\begin{proof}
By Proposition \ref{cp1} and Lemmas \ref{g1} and \ref{d1.1} we have $w(Y')\le 3$, $w(Y'')\le 3$ and if one of them,  say $Y'$, is $3$, then $Y' \cong \PP^1\times \PP^1\times \PP^1$ and $\PP^1\times \PP^1\times \PP^1$ is the
minimal multiprojective space containing $E$. Hence $w(Y'') =3$ and $Y'=Y''$, i.e. $Y\cong \PP^1\times \PP^1\times \PP^1$. Now assume $w(Y') =w(Y'') =2$.
In this case both $Y'$ and $Y''$ have the same number of factors as the minimal multiprojective space containing $E$ and exactly the same factor, i.e. if $E =\{u,v\}$
with $u=(u_1,\dots ,u_k)$, $v =(v_1,\dots ,v_k)$ and $u_i=v_i$ for all $i>2$, then $\#\pi _i(Y') =\#\pi _i(Y'') =1$ for all $i>2$. Since $\pi _i(Y')=\{u_i\} =\pi _i(Y'')$ for all $i>2$, we get $w(Y)=2$.
\end{proof}

\begin{lemma}\label{g5}
Either $S$ is as in Examples \ref{k2}, \ref{k3} or $w(S)\le 4$ with $Y=(\PP^1)^4$ if $w(S)=3$.
\end{lemma}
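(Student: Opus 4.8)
The plan is to dispose of an element $E\in \Ss (Y,q)$ according to how it meets the two triples, combining the three preceding lemmas. Fix $E\in \Ss (Y,q)$, so $\#E=2$, and split into cases by whether $E\cap A$ and $E\cap B$ are empty. If $E\cap A=E\cap B=\emptyset$, Lemma \ref{g4} already gives $w(S)\le 3$, with $Y\cong (\PP^1)^3$ when $w(S)=3$. If $E\cap A\ne\emptyset$ and $E\cap B\ne\emptyset$, Lemma \ref{d2.0} gives either $w(S)\le 2$ or that $S$ is as in Examples \ref{k2} or \ref{k3}. Thus the only remaining case is the mixed one, and by symmetry I would assume $E\cap A\ne\emptyset$ and $E\cap B=\emptyset$; the goal is to show $w(S)\le 4$, with $Y\cong (\PP^1)^4$ in the extremal case.

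In the mixed case I would argue as follows. By Lemma \ref{d2} applied to $A$, either $w(S)\le 3$ (and we are done) or $A$ is obtained from $E$ by an elementary increasing, so that $w(E\cup A)\le w(E)+1$. On the other side, Remark \ref{g2} gives $B\in \Ss (W_B,q,3)$, where $W_B$ denotes the minimal multiprojective space containing $B$, while $E\in \Ss (Y_0,q)$, where $Y_0$ is the minimal space containing $E$. By concision $Y_0$ is the minimal multiprojective space whose span contains $q$, hence $Y_0\subseteq W_B$. Since $E\cap B=\emptyset$, Proposition \ref{cp1} applies to the pair $(E,B)$ and places us in its case (2): $W_B$ is isomorphic to $\PP^2\times\PP^1$, $\PP^1\times\PP^1$ or $(\PP^1)^3$, so $w(W_B)\le 3$. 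Because $Y_0\subseteq W_B$ we have $E\cup B\subseteq W_B$, so the minimal space of $E\cup B$ equals $W_B$ and therefore $w(E\cup B)\le 3$.

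It remains to combine these two bounds, and this is the step I expect to carry the argument. Writing $T(Z):=\{h : \#\pi _h(Z)>1\}$, so that $w(Z)=\#T(Z)$, the key observation is that $T(E\cup S)=T(E\cup A)\cup T(E\cup B)$: an index $h$ trivial on both $E\cup A$ and $E\cup B$ is in particular trivial on the common set $E$, so the constant values of $\pi _h$ on $A$ and on $B$ both coincide with $\pi _h(E)$ and hence $h$ is trivial on $S$ as well. Since $T(E)\subseteq T(E\cup A)\cap T(E\cup B)$, inclusion–exclusion yields $w(S)\le w(E\cup S)=\#(T(E\cup A)\cup T(E\cup B))\le w(E\cup A)+w(E\cup B)-w(E)\le (w(E)+1)+3-w(E)=4$. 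Finally, if $w(S)=4$ then every inequality is an equality: $w(E\cup B)=3$ forces $W_B\cong (\PP^1)^3$, the elementary increasing producing $A$ must raise the width and so adjoins a new $\PP^1$ factor, and $T(E\cup A)\cap T(E\cup B)=T(E)$ forces that new factor to lie outside the factors of $W_B$. Thus $T(E\cup S)$ consists of four factors, each spanned by at least two points of a $\PP^1$, so $Y\cong (\PP^1)^4$, the asserted extremal case. The main obstacle is exactly the width bookkeeping of this last paragraph: controlling $w(E\cup B)$ through the inclusion $Y_0\subseteq W_B$ and Proposition \ref{cp1}, and verifying that an index trivial on both halves is trivial on $S$, which is what makes the crude inclusion–exclusion estimate available.
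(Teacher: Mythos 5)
Your proof is correct, and its skeleton is the paper's: the same trichotomy on how $E$ meets $A$ and $B$, with Lemmas \ref{g4} and \ref{d2.0} disposing of the two non-mixed cases, and, in the mixed case, the same two inputs, namely Lemma \ref{d2} (the triple meeting $E$ is an elementary increasing of $E$) and Proposition \ref{cp1} applied via concision to the pair consisting of $E$ and the disjoint triple. Where you genuinely differ is the combination step. The paper splits into cases according to the width of the minimal multiprojective space $Y'$ containing the disjoint union, and in the width-three case invokes Lemma \ref{g1} (the tridegree $(1,1,1)$ curve argument) to see that $E$ is concise for $Y'\cong(\PP^1)^3$ before tracking factors. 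You instead run a uniform inclusion--exclusion on the sets of nontrivial coordinates, $w(S)\le w(E\cup A)+w(E\cup B)-w(E)\le (w(E)+1)+3-w(E)=4$, so that the unknown $w(E)$ cancels; this eliminates both the case split and any appeal to Lemma \ref{g1}, and it makes the extremal analysis automatic: $w(S)=4$ forces equality throughout, hence $W_B\cong(\PP^1)^3$, an increasing factor $i\notin T(E\cup B)$, and all four nontrivial coordinates of $S$ spanning lines (for $h\ne i$ because $\pi_h(A)=\pi_h(E)$ and $E\subset Y_0\subseteq W_B$, so $\pi_h(S)$ sits inside the corresponding $\PP^1$ factor of $W_B$; for $h=i$ because the elementary increasing produces collinear points landing on the single point $\pi_i(E\cup B)$). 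The only step to expand in a final write-up is that last parenthesis, which your sketch compresses into ``each spanned by at least two points of a $\PP^1$''; as it stands it is at least as detailed as the paper's own argument, and the cancellation trick makes your version tighter and more self-contained.
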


\begin{proof}
By the previous lemmas we may assume that exactly one among $E\cap A$ and $E\cap B$, say the first one, is empty.
Thus $B$ is obtained from $E$ making a positive elementary transformation, while $w(Y') \le 3$ and $Y'\cong \PP^1\times \PP^1\times \PP^1$. First assume
$w(Y') =\PP^1\times \PP^1\times \PP^1$. By Lemma \ref{g1} $Y'$ is the minimal multiprojective space containing $E$. Hence
$w(E\cup B)\le 4$ and $Y'' =(\PP^1)^4$ with $Y\supset Y'$ if $w(Y'')=4$ (last part of Example \ref{k2}). We get $w(Y)\le 4$
and $Y\cong (\PP^1)^4$ is $S$ is not as in Examples \ref{k2} or \ref{k3}.
Now assume $w(Y')=2$. Thus $w(E)=2$. We get that either $w(Y'')=2$ or $Y''\cong \PP^1\times \PP^1\times \PP^1$ with $\#\pi
_3(A)=1$. Hence $w(Y)\le 3$.
\end{proof}

\section{$r_Y(q)=3$}\label{St}

The point $q\in \PP^N$ has tensor rank $3$ and hence $\nu(A)$ and $\nu (B)$ are tensor decompositions of it with the minimal
number of terms. By concision (\cite[Proposition 3.2.2.2]{l}) $Y$ is the minimal multiprojective space containing $A$ and the
minimal multiprojective space containing $B$. Hence $1\le n_i\le 2$ for all $i$.  $Y$ is as in the cases of
\cite[Theorem 7.1]{bbs} coming from the cases $\#S =6$, i.e. we exclude case (6) of that list. In all cases (1), (2), (3),
(4), (5) of that list we have $w(Y)\le 4$ and $w(Y)=4$ if and only if $Y \cong (\PP^1)^4$.
 The sets
$\Ss (Y,q)$ to which
$A$ and
$B$ belong are described in the same paper. The possible concise $Y$'s are listed in \cite[Theorem 7.1]{bbs}, but we stress that from the
point of view of tensor ranks among the sets
$S$ described in one of the examples of \cite{bbs} there is some structure. If we start with $S$ with $e(S)=1$ and arising in
this section  and any decomposition
$S =A\cup B$ with $\#A=\#B =3$, the assumption $e(S)=1$ and $e(A)=e(B)=0$ gives that $\langle \nu (A)\rangle \cap \langle \nu
(B)\rangle$ is a single point by the Grassmann's formula. Call $q$ this point. If we assume $r_X(q)=3$, then in \cite{bbs}
there is a description of all $S\in \Ss(Y,q)$. Changing the decomposition $S =A\cup B$ change $q$ and hence all sets
associated to $S$ using the point $q$. Thus if $e(S)=1$ and there is a partition $S =A\cup B$ of $S$ such that tte point $\langle
\nu(A)\rangle \cap \langle \nu(B)\rangle$ has tensor rank $3$, then to $S$ and the partition $S=A\cup B$ we may associate a
family $\Ss (Y,q)$ of circuits associated to $q$.

\begin{proof}[End of the proof of Theorem \ref{is1}:]
In the last $4$ sections we considered all possible cases coming from a fixed partition of $A\cup B$. We summarized the case
$r_Y(q)=2$ in the statement of Lemma \ref{g5}.
\end{proof}

\end{document}